\documentclass[a4paper,11pt,oneside,reqno]{amsart}
\usepackage{amsmath,amssymb,amsthm,mathtools}
\usepackage[pagewise]{lineno}
\mathtoolsset{showonlyrefs}
\usepackage{bm}

\usepackage{subfigure}
\usepackage{verbatim}
\usepackage{wrapfig}
\usepackage{ascmac}
\usepackage{cases}
\usepackage{amsthm}
\usepackage{autobreak}

\usepackage{comment}
\usepackage {cancel}
\usepackage[normalem]{ulem}

  \makeatletter
    
    \@addtoreset{equation}{section}
  \makeatother
\newtheorem{theorem}{Theorem}[section]

\newtheorem{lemma}[theorem]{Lemma}

\newcounter{c}
\newcounter{d}
\newcounter{b}

\newcommand{\R}{\mathbb{R}} 

\newcommand{\e}{\varepsilon} 
\newcommand{\supp}{\mathrm{supp}} 

\begin{document}
\title{Vanishing interfaces in an asymmetric fast reaction limit}
\author{Yuki Tsukamoto}
\date{}

\address{Tokyo University of Science,
	162-8601, Tokyo, Japan}
\email{tsukamoto-yuki@rs.tus.ac.jp} 

\begin{abstract}
We study the fast reaction limit for a two-component reaction-diffusion system
with asymmetric reaction terms, where only one component diffuses.
For nonnegative and mutually segregated initial data, we prove that the initial
interface vanishes instantaneously. More precisely, the diffusive component
converges uniformly to the solution of the heat equation, while the non-diffusive
component vanishes away from the initial time. The proof is based on explicit
barriers and a comparison argument, and applies under both Dirichlet and
Neumann boundary conditions.
\end{abstract}
\maketitle

\section{Introduction}

Reaction-diffusion systems provide a fundamental mathematical framework for
describing the interplay between spatial diffusion and local reaction. Such
systems arise in a wide variety of contexts, including chemical reactions,
population dynamics, materials science, and transport phenomena in porous media.
A typical two-component system is written in the form
\[
    \partial_t u = d_1\Delta u + f_1(u,v),\qquad
    \partial_t v = d_2\Delta v + f_2(u,v),
\]
where the diffusion terms describe spatial spreading, while the reaction terms
represent local interactions between the components.

In many applications, the reaction process takes place on a much faster time
scale than diffusion. To describe such regimes, one introduces a large
reaction-rate parameter \(k>0\) and studies the singular limit as
\(k\to\infty\). This singular limit, known as the fast reaction limit, has been
studied extensively in the theory of reaction-diffusion systems. 
Evans proved one of the early convergence results for a chemical
diffusion-reaction system \cite{evans1980convergence}. 
Hilhorst, van der Hout and Peletier studied
irreversible fast reactions leading to Stefan-type limiting problems
\cite{hilhorstFastReactionLimit1996}. They also treated more general monotone
reaction terms \cite{hilhorst1997diffusion}. Eymard, Hilhorst, van der Hout
and Peletier studied a reaction-diffusion approximation of a one-phase Stefan
problem \cite{eymard2001reaction}, while Bothe and Hilhorst analyzed systems
with fast reversible reactions \cite{bothe2003reaction}.

Fast reaction limits are also closely related to spatial segregation limits in
competition-diffusion systems. Dancer, Hilhorst, Mimura and Peletier studied
spatial segregation as the competition rate tends to infinity
\cite{dancer1999spatial}, and Crooks, Dancer, Hilhorst, Mimura and Ninomiya
analyzed the corresponding problem under Dirichlet boundary conditions
\cite{crooks2004spatial}. Bouillard, Eymard, Henry, Herbin and Hilhorst studied
a fast precipitation and dissolution reaction in a porous medium, providing an
example in the context of reactive transport \cite{bouillard2009fast}.

Many of the classical examples above belong to the class of balanced systems,
in which the two components are consumed by the same fast reaction term, or by
proportional fast reaction terms. A typical form is
\[
    \partial_t u_k = d_1\Delta u_k - kF(u_k,v_k),\qquad
    \partial_t v_k = d_2\Delta v_k - kF(u_k,v_k).
\]
In such systems, the fast reaction suppresses the region where both components
are simultaneously positive, and the limit is often described by a one- or
two-phase Stefan-type free boundary problem.

The theory of fast reaction limits has recently been extended in several directions. Stephan proved
EDP-convergence for a linear reversible reaction-diffusion system using its
gradient-flow structure \cite{stephan2021edp}. Perthame and Skrzeczkowski
treated nonmonotone reaction functions and described the limit by Young
measures \cite{perthame2023fast}. Skrzeczkowski studied the connection with
forward-backward diffusion by a Radon--Nikodym type approach
\cite{skrzeczkowski2022}. Crooks and Du considered nonlinear diffusion
\cite{crooks2024fast}, and Murakawa studied fast reaction limits within a
general approximation framework \cite{murakawa2021fast}. These works illustrate
that, even in balanced or closely related settings, the limiting behavior
depends strongly on the reaction and diffusion structures.

In contrast, when the fast reaction terms in the two equations are not
proportional, the problem becomes substantially different. Such systems are
often called unbalanced systems or asymmetric fast reaction systems. The
cancellation structures available in balanced systems are no longer directly
available, and the limiting interface dynamics may differ essentially from the
Stefan-type behavior of balanced systems. Conti, Terracini and Verzini
\cite{conti2005asymptotic} and Hilhorst, Iida, Mimura and Ninomiya
\cite{hilhorst2008relative} studied steady multi-component
competition-diffusion systems related to such unbalanced structures.
Time-dependent unbalanced systems were studied by Iida, Monobe, Murakawa and
Ninomiya \cite{Iida2017} and by Hayashi \cite{hayashi2021spatial}.

A representative model studied by Iida, Monobe, Murakawa and Ninomiya
\cite{Iida2017} is
\[
\begin{cases}
    \partial_t u_k = \Delta u_k - k u_k^{m_1}v_k^{m_2},\\
    \partial_t v_k = -k u_k^{m_3}v_k^{m_4},
\end{cases}
\]
where only the \(u_k\)-component diffuses. They considered nonnegative and
mutually segregated initial data, \(u_0v_0\equiv0\), so that an initial
interface is formed between the regions occupied by \(u_0\) and \(v_0\).
Their analysis focused on representative one-parameter families obtained by
fixing three of the four exponents \(m_1,m_2,m_3,m_4\) equal to one and varying
the remaining exponent.

More precisely, in the case
\((m_1,m_2,m_3,m_4)=(m_1,1,1,1)\), they proved that the interface vanishes
instantaneously when \(m_1>3\). In the case \((1,m_2,1,1)\) with \(m_2>1\),
the interface moves with finite speed and is described by a one-phase Stefan
problem. In the case \((1,1,m_3,1)\) with \(m_3>1\), the interface remains
stationary. In the case \((1,1,1,m_4)\) with \(1\le m_4<2\), the interface
again moves with finite speed. Thus the asymmetry of the reaction terms gives
rise to vanishing, moving, and stationary interfaces, depending on which
reaction exponent is varied.

In the classification of \cite{Iida2017}, the ranges \(1<m_1\le3\) in the
vanishing case and \(m_4\ge2\) in the fourth case remained open. These remaining
ranges were partially resolved in the author's recent works: the case
\(m_4\ge2\) was treated in \cite{tsukamotoStationary}, and instantaneous
interface disappearance was proved for \(2<m_1\) in \cite{tsukamoto2025}.
Thus, in the vanishing case, the range \(1<m_1\le2\) remained open.

The present paper treats this remaining range. More precisely, we prove that,
for every \(m>1\), the initial interface for
\[
\begin{cases}
    \partial_t u_k = \Delta u_k - k u_k^m v_k,\\
    \partial_t v_k = -k u_k v_k,
\end{cases}
\]
vanishes instantaneously under both Dirichlet and Neumann boundary conditions.
Consequently, together with \cite{Iida2017}, the author's previous work
\cite{tsukamoto2025}, and the author's related work on stationary interfaces
\cite{tsukamotoStationary}, the present result completes the classification of
the representative one-parameter exponent regimes in which three of the four
exponents are fixed equal to one.

The main results of this paper are stated as follows.
\begin{theorem}\label{thm:Dirichlet}
Let $\Omega\subset\R^n$ be a bounded domain with $C^2$ boundary 
$\partial\Omega$, and let $T>0$ and $m>1$. 
For each $k>0$, let $(u_k,v_k)$ denote the classical solution of
\begin{align*}
\begin{cases}
\partial_t u_k = \Delta u_k - k\,u_k^{m} v_k 
    & \text{in } Q_T:=\Omega\times(0,T),\\[2mm]
\partial_t v_k = - k\,u_k v_k 
    & \text{in } Q_T,\\[2mm]
u_k = 0 
    & \text{on } S_T:=\partial\Omega\times(0,T),\\[2mm]
u_k(\cdot,0)=u_0,\quad v_k(\cdot,0)=v_0 
    & \text{in } \Omega,
\end{cases}
\end{align*}
where 
\[
u_0\in C(\overline{\Omega}),\quad 
v_0\in L^\infty(\Omega),\quad
u_0\ge0,\ v_0\ge0,\quad 
u_0 v_0 \equiv 0,\quad 
u_0\not\equiv0,\ v_0\not\equiv0,
\]
and $u_0=0$ on $\partial\Omega$.

Then $u_k$ converges uniformly in $\overline{Q_T}$ to a function
$u_\infty\in C(\overline{Q_T})\cap C^\infty (Q_T)$, and $u_\infty$ is the unique classical 
solution of
\begin{align*}
\begin{cases}
\partial_t u_\infty = \Delta u_\infty & \text{in } Q_T,\\[2mm]
u_\infty = 0 & \text{on } S_T,\\[2mm]
u_\infty(\cdot,0)=u_0 & \text{in } \Omega.
\end{cases}
\end{align*}

Moreover, for any $\tau\in(0,T)$ and any domain $\Omega_0\Subset\Omega$, 
\[
v_k \to 0 \quad \text{uniformly in } \overline{\Omega_0}\times[\tau,T].
\]
\end{theorem}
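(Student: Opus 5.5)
The plan is to trap $u_k$ between the solution $u_\infty$ of the heat equation and a lower barrier that converges to $u_\infty$ uniformly as $k\to\infty$. The convergence of $v_k$ is then read off from the explicit formula
\[
v_k(x,t)=v_0(x)\exp\Big(-k\int_0^t u_k(x,s)\,ds\Big).
\]

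\textbf{Upper bound.} First, $u_k\ge0$ and $v_k\ge0$: the $v$-equation is linear in $v_k$, and $u_k\equiv0$ is a subsolution. Hence $\partial_t u_k-\Delta u_k=-k u_k^m v_k\le0$, and the comparison principle gives
\[
0\le u_k\le u_\infty\le\|u_0\|_\infty, \qquad 0\le v_k\le \|v_0\|_\infty .
\]
Existence, uniqueness and the regularity of $u_\infty$ are standard facts for the heat equation with continuous data vanishing on $\partial\Omega$.

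\textbf{Lower bound.} I would study $z_k:=u_\infty-u_k\ge0$. It solves
\[
\partial_t z_k-\Delta z_k=f_k:=k u_k^m v_k, \qquad z_k=0 \text{ on } S_T, \qquad z_k(\cdot,0)=0 .
\]
The role of $m>1$ is that $f_k=u_k^{m-1}\,(k u_k v_k)=u_k^{m-1}(-\partial_t v_k)$, so the source is controlled by the consumption of $v$ weighted by the small factor $u_k^{m-1}$. Fix $\delta>0$ and split according to whether $u_k\le\delta$ or $u_k>\delta$.

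On $\{u_k\le\delta\}$ the time-integrated source is at most $\delta^{m-1}\|v_0\|_\infty$.

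On $\{u_k>\delta\}$, $v_k$ is consumed at rate at least $k\delta$. It therefore decays like $e^{-k\delta t}$ and has essentially disappeared after time of order $1/(k\delta)$. On that short time span $u_k\le\|u_0\|_\infty$, so this part of the source has time integral at most $\|u_0\|_\infty^{m-1}\|v_0\|_\infty$. I expect this part to be harmless only because it acts on a time interval of length $O(1/(k\delta))\to0$: by continuity of the heat flow, the perturbation it produces should vanish as $k\to\infty$.

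To convert these pointwise-in-$x$, integrated-in-time bounds into an $L^\infty$ bound on $z_k$, I would look for a supersolution of the form
\[
Z_k=\delta^{m-1}\,w+\eta_k(t),
\]
built so that the slowly varying part $\delta^{m-1}$ and the fast transient part $\eta_k$ are absorbed separately. Here $w$ is a smooth bounded function with $-\Delta w\ge1$ (e.g.\ a torsion function).

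\textbf{Main obstacle.} This step is the heart of the proof. The source $f_k$ is controlled only after integrating in time at each fixed $x$, and it can be concentrated at different times at different points. Duhamel's formula with $\sup_x f_k(\cdot,s)$ is useless here, since that supremum can be of order $k$. What I would try first is a supersolution built from $v_k$ itself. Using $\partial_t(v_0-v_k)=k u_k v_k$, I would combine
\[
u_k+u_k^{m-1}\,\chi(v_k)
\]
for a suitable increasing $\chi$ into a quantity whose time derivative cancels $k u_k^m v_k$ up to errors of size $\delta^{m-1}$. If the lack of spatial regularity of $v_0$ makes the Laplacian of such a quantity uncontrollable, the fallback is to mollify $v_0$. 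The equation for $v_k$ gives monotone dependence on $v_0$, so comparison with a smoothed larger $v_0$ is available.

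With $\|u_k-u_\infty\|_{L^\infty(Q_T)}\le C\delta^{m-1}+o_k(1)$, letting $k\to\infty$ and then $\delta\to0$ gives uniform convergence of $u_k$.

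\textbf{Convergence of $v_k$.} Fix $\Omega_0\Subset\Omega$ and $\tau\in(0,T)$. Since $u_0\ge0$ and $u_0\not\equiv0$, the strong maximum principle gives
\[
u_\infty\ge 2c>0 \quad\text{on } \overline{\Omega_0}\times[\tau/2,T].
\]
By the uniform convergence of $u_k$, we have $u_k\ge c$ there for all large $k$. The explicit formula then yields, for $t\in[\tau,T]$,
\[
v_k(x,t)\le\|v_0\|_\infty e^{-kc\tau/2} \longrightarrow 0
\]
uniformly on $\overline{\Omega_0}\times[\tau,T]$.
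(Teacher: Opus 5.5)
Your upper bound $0\le u_k\le u_\infty$ and your last step (deducing $v_k\to0$ from uniform convergence of $u_k$ and $u_\infty>0$ in the interior) are fine. The gap is the lower bound, which is the entire content of the theorem: it is not proved, and the heuristics you give for it point the wrong way.

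\textbf{The $\{u_k>\delta\}$ heuristic fails.} You argue the source there is harmless because it acts for a time $O(1/(k\delta))$. But a source of size $O(k)$ acting for time $O(1/k)$, with time integral of order one, changes $z_k$ by order one. Already $\partial_t z=k$ on $[0,1/k]$ gives $z=1$, and continuity of the heat flow does nothing about that. Your own bound $\|u_0\|_\infty^{m-1}\|v_0\|_\infty$ for this part is not small, and for $m=1$ this consumption is real: it is the latent heat of the Stefan problem.

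\textbf{What is actually needed.} Let $t_\delta(x)$ be the first time $u_k(x,\cdot)$ reaches $\delta$. The consumption after that time is at most $\|u_0\|_\infty^{m-1}v_k(x,t_\delta(x))$. So for $m>1$ the term is small only if $v_k(x,\cdot)$ is already exhausted at $t_\delta(x)$, i.e.\ $k\int_0^{t}u_k(x,s)\,ds$ blows up before $u_k$ becomes of order one. This is a $k$-uniform statement that positivity of $u_k$ spreads into $\{v_0>0\}$. It is the vanishing of the interface itself, and nothing in your plan supplies it.

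\textbf{The supersolution device does not supply it either.}
\begin{itemize}
\item On $\{u_k\le\delta\}$ the source is bounded pointwise only by $k\delta^m\|v_0\|_\infty$, so $\delta^{m-1}w$ cannot absorb it.
\item For $u_k+u_k^{m-1}\chi(v_k)$ to cancel $-ku_k^mv_k$ through $\partial_tv_k=-ku_kv_k$, you need $\chi'\le-1$. Together with $\chi\ge0$ this forces $\chi(0)\ge v_0$. The correction is then of order one at late times in $\{v_0>0\}$, where $v_k\approx0$ and $u_k\approx u_\infty>0$.
\item Truncating $u_k$ at level $\delta$ in the weight just leaves the $\{u_k>\delta\}$ consumption uncancelled again.
\item As you yourself note, even pointwise-in-$x$ smallness of $\int_0^Tku_k^mv_k\,dt$ would not by itself give $\|z_k\|_\infty\to0$.
\end{itemize}

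\textbf{The missing ingredient.} The paper proves it in Lemma~\ref{lem:vk-small}: a $k$-independent bound $u_k\ge\rho>0$ on $\Omega'\times[s,t_1]$, for every $\Omega'\Subset\Omega$ and some $s<t_1$. Through $v_k=v_0e^{-k\int_0^tu_k}$ this gives the pointwise bound $v_k<k^{-3}$ on $\Omega'\times[t_1,T)$, which removes the integrated-versus-pointwise difficulty entirely. The bound comes from explicit subsolutions:
\begin{itemize}
\item The one-dimensional Dirichlet fast-reaction solution converges to the self-similar profile $f(x/\sqrt t)$ (Lemma~\ref{lem:dirichlet}).
\item It is lifted radially to an annulus with the weight $e^{\sigma(d_2-r)}$ (Lemma~\ref{lem:sub-nD}).
\item $m>1$ enters only through Lemma~\ref{lem:reaction-ineq}, i.e.\ $ku^mv\le\lambda_1kuv+\lambda_2u$ along these one-dimensional solutions for large $k$.
\item The annular barriers are chained outward, with time shifts, from a ball where $u_0>0$ and $v_0=0$.
\end{itemize}
After that, $e^{-t/k}\underline h$ together with $\underline V_k=k^{-3}$ is a subsolution on $\Omega'\times[\tilde t,T)$. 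The initial layer $[0,\tilde t]$ is handled by comparison with the heat flow of a smoothed $\tilde u_0$ supported where $v_0=0$.

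Note that the logical order is the reverse of yours: smallness of $v_k$ is proved first, and it is what yields uniform convergence of $u_k$.
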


\begin{theorem}\label{thm:Neumann}
Let $\Omega\subset\R^n$ be a bounded domain with $C^2$ boundary 
$\partial\Omega$, and let $T>0$ and $m>1$.
For each $k>0$, let $(u_k,v_k)$ be the classical solution of
\begin{align*}
\begin{cases}
\partial_t u_k = \Delta u_k - k\,u_k^{m} v_k 
    & \text{in } Q_T:=\Omega\times(0,T),\\[2mm]
\partial_t v_k = - k\,u_k v_k 
    & \text{in } Q_T,\\[2mm]
\partial_\nu u_k = 0 
    & \text{on } S_T:=\partial\Omega\times(0,T),\\[2mm]
u_k(\cdot,0)=u_0,\quad v_k(\cdot,0)=v_0 
    & \text{in } \Omega,
\end{cases}
\end{align*}
where
\[
u_0\in C^1(\overline{\Omega}),\quad 
v_0\in L^\infty(\Omega),\quad
u_0\ge0,\ v_0\ge0,\quad 
u_0 v_0 \equiv 0,\quad 
u_0\not\equiv0,\ v_0\not\equiv0,
\]
and $\partial_\nu u_0 = 0$ on $\partial\Omega$.

Then $u_k$ converges uniformly in $\overline{Q_T}$ to a function 
$u_\infty\in C(\overline{Q_T})\cap C^\infty (Q_T)$, which solves
\begin{align}\label{heat-Neumann}
\begin{cases}
\partial_t u_\infty = \Delta u_\infty & \text{in } Q_T,\\[2mm]
\partial_\nu u_\infty = 0 & \text{on } S_T,\\[2mm]
u_\infty(\cdot,0)=u_0 & \text{in } \Omega.
\end{cases}
\end{align}

Furthermore, for every $\tau\in(0,T)$,
\[
v_k \to 0 \quad \text{uniformly in } \overline{\Omega}\times[\tau,T].
\]
\end{theorem}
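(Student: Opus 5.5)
We prove Theorem~\ref{thm:Neumann} by squeezing $u_k$ between two functions that both converge uniformly to $u_\infty$. The Dirichlet case (Theorem~\ref{thm:Dirichlet}) follows the same scheme, with barriers localized away from $\partial\Omega$; this is why the conclusion for $v_k$ there is only on $\Omega_0\Subset\Omega$.

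\emph{Upper bound and positivity.} Since $u_k,v_k\ge0$, $u_k$ is a subsolution of the Neumann heat equation, so $u_k\le u_\infty$. Integrating the $v$-equation gives
\[
v_k(x,t)=v_0(x)\exp\Bigl(-k\int_0^t u_k(x,s)\,ds\Bigr),
\]
so $0\le v_k\le \|v_0\|_\infty$. Hence it suffices to bound $u_k$ from below by some $\underline u_k$ with $\underline u_k\to u_\infty$ uniformly.

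\emph{Lower bound.} The key mechanism is that where $v_0>0$ we have $u_0=0$, and as soon as $u_k$ becomes positive there, $v_k$ is killed at rate $k u_k$. Meanwhile the absorption $k u_k^m v_k$ is weak when $u_k$ is small, since $m>1$. Concretely, we compare $u_k$ with the solution $w$ of the linear problem
\[
\partial_t w=\Delta w-k\,u_k^{m-1}v_k\,w,
\]
so that $u_k$ itself solves it. The aim is to show that the total absorption $\int_0^T\!\int k\,u_k^m v_k$ is small in a suitable pointwise sense.

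Using the explicit formula for $v_k$,
\[
k\,u_k^m v_k = u_k^{m-1}\cdot k u_k v_0\, e^{-k\int_0^t u_k}
\le \|u_0\|_\infty^{m-1}\,\partial_t\Bigl(-v_0 e^{-k\int_0^t u_k}\Bigr).
\]
Thus the absorption is the time derivative of a bounded quantity, but this gives only an $O(1)$ bound, not smallness. To get smallness we exploit $u_k^{m-1}$: at times when $u_k\ge\delta$ the exponential is already tiny, since it requires $k\int u_k$ large. More precisely, split the absorption according to whether $u_k\le\delta$ or $u_k>\delta$:
\begin{itemize}
\item On $\{u_k\le\delta\}$ the rate is at most $\delta^{m-1}\cdot k u_k v_k$, whose time integral is at most $\delta^{m-1}\|v_0\|_\infty$.
\item On $\{u_k>\delta\}$ one needs $\int_0^t u_k$ to be bounded below. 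This requires a lower barrier showing that $u_k$ grows at least like a fixed positive function after a short time.
\end{itemize}

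\emph{Explicit barrier.} The plan is to construct a subsolution of the form
\[
\underline u=(1-\eta)\,u_\infty-\phi_k(t),
\]
or alternatively $\underline u=u_\infty\,e^{-\psi_k}$ with $\psi_k$ chosen so that
\[
\partial_t\psi_k\ge k\,u_\infty^{m-1}v_k .
\]
Since $v_k\le v_0e^{-k\int u_k}$, this becomes a self-consistent estimate: if $u_k\ge \underline u$, then $v_k$ decays, and $\psi_k$ stays $O(\delta^{m-1})+o(1)$.

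Closing this bootstrap is the main obstacle. Near the initial interface and at small times $u_\infty$ is small, so the gain $e^{-k\int u}$ is weak there. The point is to show that these regions contribute at most $\sup u^{m-1}\cdot\|v_0\|_\infty$, which is small. We would first try a continuity argument in $t$ with the comparison principle for the scalar linear equation.

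\emph{Conclusion.} With $0\le u_\infty-u_k\le \varepsilon$ for $k$ large, $v_k$ converges uniformly for $t\ge\tau$ because of the formula
\[
v_k=v_0 e^{-k\int_0^t u_k}, \qquad \int_0^\tau u_\infty\ge c>0 .
\]
The lower bound $\int_0^\tau u_\infty\ge c>0$ holds by the strong maximum principle, using $u_0\not\equiv0$; uniformity in $x$ needs $\Omega_0\Subset\Omega$ in the Dirichlet case.
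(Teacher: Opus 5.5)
\textbf{There is a genuine gap.} What you call ``closing this bootstrap'' is the whole content of the theorem. One needs a lower bound $u_k\ge\rho>0$ (or $\int_0^t u_k\,ds\ge c>0$), \emph{uniform in $k$}, at points of the region initially occupied by $v_0$, and nothing in your outline produces it. The remaining steps (the bound $u_k\le u_\infty$, the formula for $v_k$, and $v_k\to0$ once $u_k\to u_\infty$ uniformly) are routine.

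Moreover, the bootstrap you sketch does not close for $1<m\le 2$. For $\underline u=u_\infty e^{-\psi_k(t)}$ one has $(\partial_t-\Delta)\underline u+k\underline u^m v_k=\underline u\,(-\psi_k'+k\underline u^{m-1}v_k)$. So you need $\psi_k'(t)\ge\sup_x k\underline u^{m-1}v_k(x,t)$, i.e.\ control of $\int_0^T\sup_x(\cdots)\,dt$. Your splitting on $\{u_k\le\delta\}$ only bounds $\int_0^T k u_k^m v_k\,dt$ at each fixed $x$; the same mismatch appears in the Duhamel estimate for $u_\infty-u_k$. The only decay the bootstrap supplies is $v_k\le\|v_0\|_\infty\exp\bigl(-ke^{-\psi_k(t)}\int_0^t u_\infty\,ds\bigr)$. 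Near the front at early times, $a=u_\infty(x,t)$ is small and $\int_0^t u_\infty(x,s)\,ds\lesssim ta$, so the quantity $ka^{m-1}e^{-kta}$ peaks at $a\sim(kt)^{-1}$ with size of order $k^{2-m}t^{1-m}$. Heuristically this makes $\psi_k$ unbounded as $k\to\infty$ when $m\le2$, rather than $O(\delta^{m-1})+o(1)$.

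The additive barrier $(1-\eta)u_\infty-\phi_k(t)$ runs into the same problem at points where it has just become positive. Letting $\psi_k$ depend on $x$ does not help either: $\nabla\psi_k$ and $\Delta\psi_k$ then enter, and $v_k$ has no spatial regularity ($v_0\in L^\infty$ only). Comparison for the scalar equation $\partial_t w=\Delta w-ku_k^{m-1}v_kw$ says nothing until that potential is controlled. This is essentially why the paper remarks that the earlier heat-equation-based lower bound works only for $m>2$.

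The missing idea is to build the lower barrier not from the heat flow but from the \emph{balanced} fast-reaction system $\partial_t u=u_{xx}-\delta kuv$, $\partial_t v=-kuv$. Its limit is the one-phase Stefan profile $f(x/\sqrt t)$ with front $x=\iota\sqrt t$. Since $\iota(U_0,\delta V_0)\to\infty$ as $\delta\to0$, this front invades the $v$-region at a $k$-independent and arbitrarily fast rate (Lemma~\ref{lem:dirichlet}).

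Your splitting into $\{u\le\delta\}$ and $\{u>\delta\}$ is exactly the mechanism of Lemma~\ref{lem:reaction-ineq}, but there it is applied to this auxiliary solution. Its uniform convergence to an explicit profile gives $\int_0^t u\,ds\ge\eta/2$ wherever $u^{m-1}>\lambda_1$. This yields $ku^mv\le\lambda_1kuv+\lambda_2u$ for large $k$ and removes the circularity.

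The radial lift $e^{\sigma(d_2-r)}u_{k,\delta}(r,t)$ is then a subsolution of the unbalanced system on an annulus (Lemma~\ref{lem:sub-nD}). The \emph{coupled} comparison principle (Theorem~\ref{theo:comparison principle}) lets one chain such annuli, starting from a ball where $v_0=0$, to get $u_k\ge\rho$ on $\Omega'\times[s,\tilde t]$ uniformly in $k$. Near $\partial\Omega$ one uses annuli with $\nu(y)\cdot(y-x_i)>0$, so that the radially decreasing barrier has $\partial_\nu\underline u\le0$.

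This gives $v_k<k^{-3}$ on $\overline\Omega\times[\tilde t,T)$ \emph{first}. Only then is $(e^{-t/k}\underline h,\,k^{-3})$ a lower comparison pair yielding uniform convergence of $u_k$; the interval $[0,\tilde t]$ is handled by the mixed heat problem on $\supp(\tilde u_0)$, where $v_0=0$. So the logical order is the reverse of yours: $v_k\to0$ is proved before, and is needed for, $u_k\to u_\infty$.
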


We briefly explain the idea of the proof and the difference from
\cite{tsukamoto2025}. In \cite{tsukamoto2025}, the key step was to obtain a
lower bound for \(u_k\) from a suitably chosen auxiliary heat equation. This
argument was effective for \(m>2\), but it relied on estimates that do not
extend to the range \(1<m\le2\).

The present paper uses a different barrier construction. We first construct a
one-dimensional barrier for an auxiliary Dirichlet problem associated with the
fast reaction limit. This barrier is then lifted to higher-dimensional annular
regions as a radial function. By applying such annular barriers successively, we
propagate the positivity of \(u_k\) from the initial positive region to
arbitrary compact subsets of the domain. This
gives, for every \(\tau>0\), a positive lower bound independent of \(k\) for
\[
    \int_0^t u_k(x,s)\,ds
\]
on the relevant space-time regions. Since
\[
    v_k(x,t)
    =
    v_0(x)\exp\left(-k\int_0^t u_k(x,s)\,ds\right),
\]
the non-diffusive component \(v_k\) then converges to zero exponentially fast
away from the initial time. Once this estimate is obtained, the convergence of
\(u_k\) follows by comparison with the corresponding heat equation.

The paper is organized as follows. In Section~2, we establish a comparison
principle adapted to the asymmetric structure of the system. We also construct
the auxiliary barriers used in the proof: first a one-dimensional barrier for a
Dirichlet fast reaction problem, and then its radial extension to annular
subsolutions in higher dimensions. In Section~3, these barriers are applied
successively to propagate the positivity of \(u_k\) through the domain. This
yields the disappearance of \(v_k\) away from the initial time, and the proofs
of Theorems~\ref{thm:Dirichlet} and~\ref{thm:Neumann} are then completed by
comparison with the corresponding heat equation.

	\section{Comparison principle and auxiliary functions}

\begin{theorem} \label{theo:comparison principle}
Let $A, B \subset \R^n $ be bounded domains with piecewise $C^2$ boundaries.
Let $u, \tilde{u} \in C([0,T]; C(\overline{A})) \cap C^1((0,T]; C(\overline{A})) \cap C((0,T]; W^{2,p}(A))$
		and $v, \tilde{v} \in C^1([0,T]; L^\infty(A))$  be nonnegative functions. 
Assume they satisfy the inequalities
		\begin{equation} \label{eq:comparison}
			\begin{aligned}
				\partial_t u &\geq \Delta u - k u^m v, \quad
				\partial_t \tilde{u} \leq \Delta \tilde{u} - k \tilde{u}^m \tilde{v} &&\text{in } A \times (0,T), \\
				\partial_t v &\leq -k u v, \quad
				\partial_t \tilde{v} \geq -k \tilde{u} \tilde{v} &&\text{in } A \times (0,T), \\
				\partial_\nu u &\ge \partial_\nu \tilde{u} &&\text{on } ((\partial A) \setminus B) \times (0,T), \\
				 u &\ge  \tilde{u} &&\text{on } ((\partial A) \cap B) \times (0,T), \\
				u(x,0) &\geq \tilde{u}(x,0), \quad v(x,0) \leq \tilde{v}(x,0) &&\text{for } x \in A.
			\end{aligned}
			\end{equation}
			Then
			\begin{align*}
				u \geq \tilde{u} , v \leq \tilde{v} \quad  \text{in } A \times (0,T).
			\end{align*}
		\end{theorem}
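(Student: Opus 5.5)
Our plan is to reduce the comparison to a Gronwall-type estimate for the positive parts
\[
w:=(\tilde u-u)_+ ,\qquad z:=(v-\tilde v)_+ ,
\]
in $L^1(A)$. First, all functions are bounded on $\overline A\times[0,T]$: $u,\tilde u$ are continuous on the compact set, and $v,\tilde v\in C^1([0,T];L^\infty(A))$. So we fix $M$ with
\[
0\le u,\tilde u\le M,\qquad 0\le v,\tilde v\le M .
\]
The nonlinearities $u^m v$ and $uv$ are then Lipschitz in both variables on $[0,M]^2$, because $m>1$ makes $s\mapsto s^m$ Lipschitz on $[0,M]$ with constant $mM^{m-1}$.

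\emph{The $v$-inequality.} Set $\phi:=v-\tilde v$. From the inequalities for $v,\tilde v$,
\[
\partial_t\phi\le -k(uv-\tilde u\tilde v)= -k u\,\phi - k\tilde v\,(u-\tilde u).
\]
Since $u\ge0$, the first term is nonpositive wherever $\phi>0$. For the second, $-(u-\tilde u)\le w$ and $\tilde v\ge0$, so $-k\tilde v(u-\tilde u)\le kMw$. Multiplying by $\mathbf 1_{\{\phi>0\}}$, which is legitimate for $W^{1,1}$-in-time functions (via a smooth approximation of $s_+$), and integrating over $A$ gives
\[
\frac{d}{dt}\int_A z\le kM\int_A w .
\]

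\emph{The $u$-inequality.} Set $\psi:=\tilde u-u$. Then
\[
\partial_t\psi\le\Delta\psi - k(\tilde u^m\tilde v-u^m v)
= \Delta\psi - k\tilde v(\tilde u^m-u^m) - k u^m(\tilde v-v).
\]
Where $\psi>0$ we have $\tilde u^m\ge u^m$, so the middle term is $\le0$. The last term is bounded by $kM^m z$. Test with a smooth approximation $\eta_\delta(\psi)$ of $\mathbf 1_{\{\psi>0\}}$, with $\eta_\delta'\ge0$, and integrate by parts, which is allowed since $\psi(t)\in W^{2,p}(A)$ and $\partial A$ is piecewise $C^2$. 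The interior gradient term $-\int\eta_\delta'(\psi)|\nabla\psi|^2$ is $\le0$. The boundary term is $\int_{\partial A}\eta_\delta(\psi)\,\partial_\nu\psi$, and it splits as follows:
\begin{itemize}
\item on $(\partial A)\cap B$ we have $\psi\le0$, so $\eta_\delta(\psi)=0$ and the term vanishes;
\item on $(\partial A)\setminus B$ we have $\partial_\nu\psi=\partial_\nu\tilde u-\partial_\nu u\le0$ and $\eta_\delta\ge0$, so the term is $\le0$.
\end{itemize}
Letting $\delta\to0$ yields
\[
\frac{d}{dt}\int_A w\le kM^m\int_A z .
\]

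\emph{Conclusion.} Adding the two estimates, $Y(t):=\int_A(w+z)$ satisfies
\[
Y'\le k(M+M^m)\,Y,\qquad Y(0)=0 ,
\]
the latter by the initial ordering. Gronwall gives $Y\equiv0$, so $u\ge\tilde u$ and $v\le\tilde v$ a.e. The $u$-inequality then holds everywhere by continuity.

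\emph{Main obstacle.} The delicate step is the regularity needed to justify the time differentiation and the integration by parts. Near $t=0$ we only have $C([0,T];C(\overline A))$, so we will run the argument on $[\varepsilon,t]$ and let $\varepsilon\to0$, using continuity of $Y$ at $0$; for the $v$-part we use $v\in C^1([0,T];L^\infty)$. The second concern is the trace of $\partial_\nu\psi$ on a piecewise $C^2$ boundary for $W^{2,p}$ functions. We will take $p$ large enough, or argue in the weak sense, so that Green's formula applies and the sign conditions on $(\partial A)\setminus B$ can be used a.e.
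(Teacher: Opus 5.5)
Your proof is correct and follows essentially the same strategy as the paper. Both arguments test the differential inequalities for $\tilde u-u$ and $v-\tilde v$ with (truncations of) their positive parts, discard the diagonal reaction terms and the boundary terms on $(\partial A)\cap B$ and $(\partial A)\setminus B$ by the same sign considerations, and close with a Gronwall-type argument. The differences are the norm and the decomposition of the cross terms. You work in $L^1$ with a smoothed sign function, so the cross terms are bounded linearly by $kM\int_A w$ and $kM^m\int_A z$, with no Young inequality needed. The paper works in $L^2$ with test functions $U_+e^{-Lt}$ and $V_+e^{-Lt}$, and absorbs the Young-inequality cross terms by taking $L=2k(\|\tilde u\|_{L^\infty}^m+\|\tilde v\|_{L^\infty})$.
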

		\begin{proof}
			For convenience, we set $A_T:= A \times (0,T)$. Define $U=\tilde u - u$ and $V = v - \tilde v$, and denote 
$U_+=\max\{U,0\}$ and $V_+=\max\{V,0\}$. 
We will verify that $U_+$ and $V_+$ vanish identically in $A_T$.
From the inequalities in \eqref{eq:comparison}, 
for any nonnegative test functions 
$\phi,\psi\in C^\infty (\overline{A_T})$ we obtain
\begin{align} \label{eq com2}
	\begin{split}
		\int_{A_T}
\bigl(
 &-U\,\partial_t\phi 
 + \nabla U\cdot\nabla\phi 
 + k(\tilde u^m \tilde v - u^m v)\phi
 \bigr)\,dx\,dt\\
&- \int_{((\partial A) \cap B)  \times (0,T)} \partial_\nu U \phi \,dx \,dt + \int_{A} U(x,T)\phi(x,T)\,dx 
\le 0, \\[2mm]
\int_{A_T}
\bigl(
 &-V \,\partial_t\psi
 + k( uv - \tilde u\tilde v ) \psi
 \bigr)\,dx\,dt
+ \int_{A} V(x,T)\psi(x,T)\,dx
\le 0.
	\end{split}
\end{align}
We now take $\phi = U_+ e^{-Lt}$ and $\psi = V_+ e^{-Lt}$ with a constant $L>0$ 
to be chosen sufficiently large later.  
Since these functions are not smooth, we approximate them by smooth test 
functions and pass to the limit.  
Applying \eqref{eq com2}, we obtain
\begin{align} \label{eq:com 3}
				\begin{split}
					0 \geq& \int_{A_T} e^{-Lt} \left( LU_+^2 -\frac{1}{2}\partial_t (U_+^2) + |\nabla U_+|^2  + k ( \tilde{u}^m \tilde{v}-u^mv) U_+ \right)  \,dx \,dt
					\\ &+ \int_{A} U_+^2(x,T) e^{-LT}  \, dx \\
					\geq&\int_{A_T} e^{-Lt} \left( \frac{LU_+^2}{2}   + k ( \tilde{u}^m \tilde{v}-u^mv) U_+  \right)  \,dx \,dt,\\
					0  \geq& \int_{A_T} e^{-Lt} \left( \frac{LV_+^2}{2} + k (u v-\tilde{u} \tilde{v} ) V_+  \right)  \,dx \,dt 
				\end{split}
			\end{align}
Next we estimate the reaction terms. 
From Young’s inequality, we obtain
\begin{align}\label{eq:com 4}
	\begin{split}
	(\tilde u^m \tilde v - u^m v) U_+ &\ge \tilde u^m ( \tilde v-v)U_+ +v(\tilde u^m- u^m)U_+\\
	&\ge -\frac12 \| \tilde u \|^m_{L^{\infty}}(U_+^2 + V_+^2),\\
(uv - \tilde u \tilde v) V_+ &\ge u(v - \tilde v)V_+ + \tilde v (u - \tilde u)V_+\\
	&\ge -\frac12 \|\tilde v \|_{L^{\infty}}(U_+^2 + V_+^2).
	\end{split}
\end{align}
We now choose  $L=2k(\| \tilde u \|^m_{L^{\infty}}+\|\tilde v \|_{L^{\infty}})$. Then, combining \eqref{eq:com 3} with \eqref{eq:com 4}, we obtain  
\begin{align*}
				0\geq& \int_{A_T}  \frac{Le^{-Lt}}{2}(U_+^2+V_+^2) -  
				e^{-Lt}\frac{k}{2}\left(\| \tilde u \|^m_{L^{\infty}}+\|\tilde v \|_{L^{\infty}}\right)(U_+^2+V_+^2)   \,dx\, dt.\\
				=& \int_{A_T} \frac{Le^{-Lt}}{4}(U_+^2+V_+^2)  \,dx\, dt.
			\end{align*}
	This implies that $U_+ = 0$ and $V_+ = 0$ in $A_T$.
			Thus, we have $u \geq \tilde{u}$ and $v \leq \tilde{v}$ in $A_T$.
		\end{proof}
In the next step, we construct a one-dimensional interpolation function,
which will play a key role in the analysis below.  
To this end, we first introduce a self-similar solution.
Let $U_0, V_0 > 0$ be arbitrary constants.  
We choose a constant $\iota= \iota(U_0,V_0) > 0$ such that
\begin{align} \label{iota}
	2 \frac{U_0}{V_0} = \iota \int_{0}^{\iota} e^{\frac{\iota^2-s^2}{4}}\, ds .
\end{align}
With this choice of $\iota$, we define the profile
\[
f^{(U_0,V_0)}(\eta)
=
\begin{cases}
\displaystyle
U_0 \left(
1 - 
\frac{\int_{0}^{\eta} e^{-s^2/4}\, ds}
     {\int_{0}^{\iota} e^{-s^2/4}\, ds}
\right)
& \text{if } \eta \le \iota, \\
0
& \text{if } \eta > \iota .
\end{cases}
\]
We then introduce the function $U(x,t) := f^{(U_0,V_0)} \left( x/{\sqrt{t}} \right)$.
One checks that $U$ satisfies
\begin{align} \label{limit function}
  \begin{cases}
    U_t = U_{xx}
      & \text{in } \{(x,t)\in (0,\infty)\times(0,T):\, x < \iota\sqrt{t}\}, \\[2mm]
    U(0,t) = U_0
      & \text{for } t\in(0,T), \\[2mm]
    U(\iota\sqrt{t},\, t) = 0
      & \text{for } t\in(0,T), \\[2mm]
    U(x,0) = 0
      & \text{for } x>0 .
  \end{cases}
\end{align}
On the half-line, it is known from the earlier work \cite{hilhorstFastReactionLimit1996} that
solutions of fast reaction systems converge, as $k\to\infty$,
to the self-similar profile \eqref{limit function}.  
Motivated by this result, we show in the next lemma that an analogous
convergence also holds under the Dirichlet boundary condition.
\begin{lemma}\label{lem:dirichlet}
Let $U_0, V_0, d_1, d_2 > 0$ be arbitrary constants with $d_1 < d_2$, 
and let $\iota>0$ be chosen so as to satisfy \eqref{iota}. 
Assume that $T \ge d_2^{2}\iota^{-2}$.
For each $k>0$, there exists a classical solution $(u_k, v_k)$ of
\begin{equation}\label{eq:lem2.2}
\begin{aligned}
  \partial_t u_k &= \partial_{xx} u_k - k u_k v_k
    && \text{in } (d_1,d_2)\times(0,T),\\
  \partial_t v_k &= - k u_k v_k
    && \text{in } (d_1,d_2)\times(0,T).
\end{aligned}
\end{equation}
subject to the boundary and initial conditions
\begin{align*}
  u_k(d_1,t) &\le  U_0 && \text{for } t\in(0,T),\\
  u_k(d_2,t) &= 0   && \text{for } t\in(0,T),\\
  u_k(x,0) &= 0     && \text{for } x\in(d_1,d_2),\\
  v_k(x,0) &= V_0   && \text{for } x\in(d_1,d_2),
\end{align*}
and satisfying the monotonicity and boundedness conditions
\begin{align*}
  &\partial_x u_k(x,t) \le 0, \qquad
    \partial_t u_k(x,t) \ge 0
    && \text{in } (x,t)\in(d_1,d_2)\times(0,T),\\
  &0 \le u_k(x,t) \le U_0
    && \text{in } (x,t)\in[d_1,d_2]\times[0,T].
\end{align*}
Then, as $k\to\infty$, the sequence $\{u_k\}$ converges to $U$ uniformly in $[d_1,d_2]\times[0,d_2^{2}\iota^{-2}]$,
where $U$ is the function characterized by \eqref{limit function}.
\end{lemma}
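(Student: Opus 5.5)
The plan is a compactness argument: pass to the limit $k\to\infty$, identify the limit as the unique solution of a one-phase Stefan problem, and check that the self-similar function $U$ of \eqref{limit function} is that solution on $[d_1,d_2]\times[0,d_2^2\iota^{-2}]$. One point is not fixed by the statement as written: the Dirichlet datum at $x=d_1$. The statement only requires $u_k(d_1,t)\le U_0$, so I will read it as $u_k(d_1,t)=U(d_1,t)$, or at least $u_k(d_1,\cdot)\to U(d_1,\cdot)$ uniformly. Without some such condition the limit cannot be $U$.

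\textbf{Step 1: uniform estimates.} We have $0\le u_k\le U_0$, and from the second equation
\[
v_k=V_0\exp\Bigl(-k\int_0^t u_k\,ds\Bigr),
\]
so $0\le v_k\le V_0$ and $v_k$ is nonincreasing in $t$. Adding the two equations gives
\[
\partial_t(u_k-v_k)=\partial_{xx}u_k .
\]
Integrating over $(d_1,d_2)\times(0,t)$, and using the monotonicity assumptions to control the boundary fluxes, bounds $k\iint u_kv_k$ uniformly in $k$. Since $\partial_tu_k\ge0$, $\partial_xu_k\le0$ and $0\le u_k\le U_0$, the family $\{u_k\}$ has uniformly bounded variation in each variable. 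Multiplying the $u_k$ equation by $u_k$ and using the bound on the reaction term gives a uniform bound on $\iint|\partial_xu_k|^2$. Helly's theorem together with this $L^2_tH^1_x$ bound then yields a subsequence with $u_k\to u$ a.e. and weakly in $L^2_tH^1_x$, and $v_k\rightharpoonup^* v$.

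\textbf{Step 2: the limit problem.} Since $\iint u_kv_k=O(1/k)$, the limit satisfies $uv=0$. Because $v_k$ is monotone in $t$ and its decay is driven by $u_k$, which is nondecreasing in $t$ and nonincreasing in $x$, the set $\{u>0\}$ should be of the form $\{x<s(t)\}$ with $v=V_0$ on $\{x>s(t)\}$ and $v=0$ on $\{x<s(t)\}$. The identity from Step 1 passes to the limit weakly as
\[
\partial_t(u-v)=u_{xx},
\]
which is the enthalpy formulation of the one-phase Stefan problem
\[
u_t=u_{xx}\ \text{in } \{x<s(t)\},\qquad u=0 \ \text{on } x=s(t),\qquad V_0\,s'(t)=-u_x(s(t),t),
\]
with initial data $u=0$, $v=V_0$ and the Dirichlet data at $d_1$ and $d_2$. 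Uniqueness of weak solutions of this enthalpy problem is classical (an $L^1$-contraction or Oleinik-type argument). Hence the whole family converges, not only a subsequence.

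\textbf{Step 3: identification with $U$.} A direct computation shows that $U$ solves the heat equation below the curve $\iota\sqrt t$ and vanishes on it. The Stefan condition $V_0\,\iota/2=-f'(\iota)$ reads
\[
\frac{V_0\iota}{2}=\frac{U_0e^{-\iota^2/4}}{\int_0^\iota e^{-s^2/4}ds},
\]
which is exactly \eqref{iota}. On $[0,d_2^2\iota^{-2}]$ we have $\iota\sqrt t\le d_2$, so the front does not reach $x=d_2$, and $U(d_2,t)=0$ holds there. Therefore $U$ restricted to $[d_1,d_2]$ is the weak solution from Step 2 with the matching data at $d_1$, and uniqueness gives $u=U$.

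\textbf{Step 4: uniform convergence; the main obstacle.} Upgrading a.e.\ convergence to uniform convergence on the closed rectangle, including near $t=0$ and near the corner $x=d_1$, is the step I expect to be hardest. Away from the free boundary I would use interior and boundary parabolic estimates. The general mechanism is that the monotonicity in $x$ and $t$, combined with continuity of the limit $U$, upgrades pointwise convergence to uniform convergence, as in Dini's theorem. The delicate point is continuity of $U$ at $(d_1,0)$: $U(d_1,t)=f(d_1/\sqrt t)\to0$ as $t\downarrow0$ because $d_1>0$. To control the initial layer I would build explicit barriers from the heat equation (a supersolution ignoring the reaction, and $0$ as a subsolution) and compare using Theorem~\ref{theo:comparison principle}.
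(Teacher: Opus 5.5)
Your proposal is correct as a proof of the convergence, and it takes a genuinely different route from the paper. You are right that the datum at $x=d_1$ must be pinned down: $u_k\equiv0$, $v_k\equiv V_0$ satisfies every listed condition but does not converge to $U$.

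\textbf{How the paper argues.} The paper takes $u_k(d_1,t)=\tilde u_k(d_1,t)$, where $\tilde u_k$ is the half-line solution with $\tilde u_k(0,t)=U_0$. Since $\tilde u_k\to U$ uniformly away from the origin, this is exactly your weaker reading. The paper then gets existence and $L^2$ convergence from \cite{eymard2001reaction}, and identifies the limit by uniqueness as you do. It upgrades to uniform convergence by splitting the rectangle:
\begin{itemize}
\item on $\{U<\varepsilon/2\}$ it squeezes $0\le u_k\le\tilde u_k$ and uses the known uniform convergence of $\tilde u_k$;
\item on $\{U\ge\varepsilon/2\}$ it uses a $W^{2,1}_2$ bound on $z_k=\int_0^tu_k\,ds$, Morrey's embedding and $v_k=V_0e^{-kz_k}$ to make $ku_kv_k$ bounded, and then applies parabolic regularity.
\end{itemize}

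\textbf{How your route differs.} You get compactness in $BV$ from the monotonicity. Your Step 4 is easier than you expect. On a grid cell $[x_i,x_{i+1}]\times[t_j,t_{j+1}]$ one has $u_k(x_{i+1},t_j)\le u_k(x,t)\le u_k(x_i,t_{j+1})$, and likewise for $U$. Moreover, $U$ is uniformly continuous on the closed rectangle because $d_1>0$. So it suffices to have convergence at finitely many grid points:
\begin{itemize}
\item on $t=0$ and $x=d_2$ the data are exact;
\item on $x=d_1$ you use the boundary datum;
\item at interior points, a.e.\ convergence is squeezed by monotonicity and continuity of $U$.
\end{itemize}
This gives uniform convergence with no heat barriers or parabolic estimates. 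Only if $T=d_2^2\iota^{-2}$ exactly do the grid points at the final time need one more line, and the later application only uses $t<d_2^2\iota^{-2}$. Your route thus trades the half-line barrier, the $W^{2,1}_2$ estimate and parabolic regularity for an elementary P\'olya-type argument. The price is that it leans entirely on $\partial_xu_k\le0$ and $\partial_tu_k\ge0$.

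\textbf{What is missing.} You treat the monotonicity and the bound $0\le u_k\le U_0$ as hypotheses, while the lemma asserts the existence of solutions with these properties. Since your whole argument rests on them, this is not an optional detail. To close it:
\begin{itemize}
\item fix a datum at $d_1$ that is nondecreasing in $t$ and converges to $U(d_1,\cdot)$ (either $U(d_1,t)$ itself or the paper's $\tilde u_k(d_1,t)$);
\item cite existence;
\item run the maximum-principle argument on $\partial_tu_k$ and $\partial_xu_k$ as in \cite[Lemma~4.2]{hilhorstFastReactionLimit1996}, and get the bound on $u_k$ from comparison.
\end{itemize}

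\textbf{A small simplification.} The bound on $k\iint u_kv_k$ in Step 1 needs no flux control: the $v$-equation gives $k\int_0^tu_kv_k\,ds=V_0-v_k(x,t)\le V_0$ directly.
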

\begin{proof}
	We first recall the convergence result for the corresponding problem
posed on the half-line. For each $k>0$, let $(\tilde u_k,\tilde v_k)$
denote the classical solution of
\begin{align*}
  \partial_t \tilde u_k &= \partial_{xx} \tilde u_k - k \tilde u_k \tilde v_k
  && \text{in } (0,\infty)\times(0,T),\\
  \partial_t \tilde v_k &= - k \tilde u_k \tilde v_k
  && \text{in } (0,\infty)\times(0,T),
\end{align*}
subject to the boundary and initial conditions
\begin{align*}
  \tilde u_k(0,t) &= U_0 && \text{for } t \in (0,T),\\
  \tilde u_k(x,0) &= 0   && \text{for } x \in (0,\infty),\\
  \tilde v_k(x,0) &= V_0 && \text{for } x \in (0,\infty).
\end{align*}
By combining \cite[Theorem~4.1]{hilhorstFastReactionLimit1996} and \cite[Theorem~3.16]{crooksSelfsimilarFastreactionLimits2016}, the sequence
$\{\tilde u_k\}_{k>0}$ converges to the self-similar profile $U$
defined in \eqref{limit function} locally uniformly in
$([0,\infty)\times (0,T]) \setminus \{(0,0)\}$ as $k\to\infty$.

Next, for each $k>0$ we construct a solution $(u_k,v_k)$ on the interval $(d_1,d_2)$.
At $x=d_1$ we prescribe the boundary value
\[
    u_k(d_1,t)=\tilde u_k(d_1,t) \qquad (0<t< T),
\]
and at $x=d_2$ we impose the homogeneous Dirichlet condition together with the initial data
\[
    u_k(d_2,t)=0, \qquad
    u_k(x,0)=0, \qquad v_k(x,0)=V_0.
\]
By \cite{eymard2001reaction}, a solution to \eqref{eq:lem2.2} exists under these boundary and initial conditions,
and the usual regularity estimates ensure that $u_k$ and $v_k$ are smooth in the prescribed domain.

Moreover, by \cite[Theorem~1.1]{eymard2001reaction}, the sequence $\{u_k\}$
converges in $L^2\bigl((d_1,d_2)\times (0,T)\bigr)$ to some limit
$\widehat U$. Since the initial and boundary data of $U$ coincide with those of
$u_k$, the uniqueness of the limit
problem implies that $\widehat U = U$ in $[d_1,d_2]\times[0,d_2^{2}\iota^{-2}]$.

Since the comparison principle yields $\tilde u_k(d_1,t)\le U_0$, we also have $u_k(d_1,t)\le U_0$.
Moreover, following the argument in the proof of \cite[Lemma~4.2]{hilhorstFastReactionLimit1996},
we differentiate the equation for $u_k$ with respect to both $x$ and $t$, and apply the
maximum principle to the negative parts of $\partial_x u_k$ and $\partial_t u_k$. 
This yields
\[
    \partial_x u_k(x,t)\le 0, \qquad \partial_t u_k(x,t)\ge 0
    \qquad\text{in } (d_1,d_2)\times(0,T),
\]
and hence the required monotonicity condition is satisfied.

By Theorem~\ref{theo:comparison principle} we further have
\[
    0\le u_k(x,t)\le \tilde u_k(x,t) \le U_0
    \qquad\text{for all } (x,t)\in[d_1,d_2]\times[0,T].
\]
To prove uniform convergence, let $\e >0$ be arbitrary and define
\begin{align*}
	& A:=\bigl\{(x,t)\in(d_1,d_2)\times(0,d_2^{2}\iota^{-2}) :
        U(x,t)<\e /2 \bigr\},\\
	&			A^c:=((d_1,d_2)\times(0,d_2^{2}\iota^{-2}))\setminus A.
\end{align*}
Since $\tilde u_k\to U$ locally uniformly in
$([0,\infty)\times[0,d_2^{2}\iota^{-2}])\setminus\{(0,0)\}$,
there exists $K\in\mathbb{N}$ such that for all $k>K$,
\[
    \sup_{(x,t)\in (d_1,d_2)\times(0,d_2^{2}\iota^{-2})} |\tilde u_k(x,t)-U(x,t)| < \e /2.
\]
Using $0\le u_k\le \tilde u_k$, we then obtain for all $k>K$,
\[
    \sup_{(x,t)\in A} |u_k(x,t)-U(x,t)|
        \le \sup_{(x,t)\in A} |\tilde u_k(x,t)-U(x,t)|
            + \sup_{(x,t)\in A} U(x,t)
        < \e.
\]

Moreover, by the proof of Lemma~4.3 in~\cite{eymard2001reaction}, the family $z_k(x,t):=\int_0^t u_k(x,s)\,ds$
is uniformly bounded in $W^{2,1}_2\bigl((d_1,d_2)\times(0,d_2^{2}\iota^{-2})\bigr)$,
independently of $k$.
Since the space dimension is one, Morrey's inequality implies that
\[
    z_k \to z_\infty := \int_0^t U(x,s)\,ds \quad \text{uniformly on }
    [d_1,d_2]\times[0,d_2^{2}\iota^{-2}].
\]
Since $U\ge \e/2$ on $A^c$, the continuity of $U$ implies that
\[
    \inf_{(x,t)\in A^c} z_\infty \ge 2\delta
\]
for some constant $\delta>0$. The uniform convergence $z_k\to z_\infty$ then gives
\[
    \inf_{(x,t)\in A^c} z_k(x,t) \ge \delta
    \qquad\text{for all sufficiently large } k.
\]

For $(x,t)\in A^c$ we therefore have
\[
    k u_k(x,t) v_k(x,t)
      = k u_k V_0 e^{-k z_k}
      \le U_0 V_0\, k e^{-k\delta}.
\]
Since $k e^{-k\delta}$ is bounded on $(0,\infty)$, the reaction term
$k u_k v_k$ is uniformly bounded on $A^c$ for all large $k$.

Hence, applying \cite{ladyzhenskaya1968} on $A^c$, we deduce that
$u_k \to U$ uniformly on $A^c$. Combining this with the estimate on $A$ yields
\[
    u_k \to U \qquad\text{uniformly on }
    [d_1,d_2]\times[0,d_2^{2}\iota^{-2}].
\]
\end{proof}
Before deriving the structural inequality for the reaction term,
we record a simple scaling property of the system.
For $\delta>0$, consider the modified problem
\[
    \partial_t u_{k,\delta}
      = u_{k,\delta,xx} - \delta k\,u_{k,\delta} v_{k,\delta},
    \qquad
    \partial_t v_{k,\delta}
      = - k\,u_{k,\delta} v_{k,\delta},
\]
posed with the same boundary and initial conditions as in
Lemma~\ref{lem:dirichlet}.
Defining
\[
    \tilde v_{k,\delta} := \delta v_{k,\delta},
\]
the pair $(u_{k,\delta},\tilde v_{k,\delta})$ satisfies the system
\eqref{eq:lem2.2} with the boundary-initial data $(U_0,\delta V_0)$ in place of
$(U_0,V_0)$.
Hence, by Lemma~\ref{lem:dirichlet} and the known convergence result on the half-line,
\[
    u_{k,\delta}(x,t)
    \longrightarrow
    f^{(U_0,\delta V_0)} \left(\frac{x}{\sqrt{t}}\right)
    \qquad\text{as } k\to\infty.
\]
This scaling property will be used in the construction of barrier
functions below.
\begin{lemma}\label{lem:reaction-ineq}
Let $m>1$, and let $(u_{k,\delta},v_{k,\delta})$ be the classical solutions
of the scaled system introduced above.
Fix constants $\lambda_1,\lambda_2>0$.
Then there exists $K>0$ such that, for all $k>K$,
\begin{equation}
    k u_{k,\delta}^{\,m} v_{k,\delta}
    - \lambda_1 k u_{k,\delta} v_{k,\delta}
    - \lambda_2 u_{k,\delta}
    \le 0
\end{equation}
for every $(x,t)\in(d_1,d_2)\times(0,d_2^{2}(\iota(U_0, \delta V_0))^{-2})$.
\end{lemma}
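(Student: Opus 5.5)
The inequality factors as
\[
k u_{k,\delta}^{m} v_{k,\delta} - \lambda_1 k u_{k,\delta} v_{k,\delta} - \lambda_2 u_{k,\delta}
= u_{k,\delta}\Bigl( k v_{k,\delta}\bigl(u_{k,\delta}^{m-1}-\lambda_1\bigr) - \lambda_2\Bigr),
\]
so, since $u_{k,\delta}\ge0$, it suffices to show that at every point either $u_{k,\delta}^{m-1}\le\lambda_1$, or $k v_{k,\delta}\,(u_{k,\delta}^{m-1}-\lambda_1)\le\lambda_2$. The plan is to split the rectangle $(d_1,d_2)\times(0,d_2^2\iota^{-2})$ (with $\iota=\iota(U_0,\delta V_0)$) according to the size of $u_{k,\delta}$. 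On the set where $u_{k,\delta}\le \lambda_1^{1/(m-1)}=:\mu$, the bracket is at most $-\lambda_2<0$ and there is nothing to prove. If $U_0\le\mu$, this covers everything because $0\le u_{k,\delta}\le U_0$ by Lemma~\ref{lem:dirichlet} applied to the rescaled pair $(u_{k,\delta},\delta v_{k,\delta})$. So the real case is $U_0>\mu$, and the remaining task is the region where $u_{k,\delta}>\mu$.

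On $\{u_{k,\delta}>\mu\}$ I will show that $v_{k,\delta}$ is exponentially small, uniformly. The $v$-equation integrates explicitly to
\[
v_{k,\delta}(x,t)=V_0\exp\bigl(-k z_{k,\delta}(x,t)\bigr),\qquad z_{k,\delta}(x,t)=\int_0^t u_{k,\delta}(x,s)\,ds.
\]
By Lemma~\ref{lem:dirichlet} (through the scaling remark), $u_{k,\delta}\to U:=f^{(U_0,\delta V_0)}(x/\sqrt t)$ uniformly on the closed rectangle. Choose $k$ large so that $\|u_{k,\delta}-U\|_\infty<\mu/2$. Then $u_{k,\delta}(x,t)>\mu$ forces $U(x,t)>\mu/2$. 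Because $U$ is nonincreasing in $\eta=x/\sqrt t$, this means $x/\sqrt t<\eta_0$, where $\eta_0<\iota$ is defined by $f(\eta_0)=\mu/2$.

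At such a point, for $s\in[t/4,t]$ we still have $x/\sqrt s\ge x/\sqrt t$, which is the wrong direction for a lower bound. So I will instead use the monotonicity $\partial_t u_{k,\delta}\ge0$ from Lemma~\ref{lem:dirichlet}. This lemma applies because the rescaled problem is exactly \eqref{eq:lem2.2} with data $(U_0,\delta V_0)$. The time monotonicity gives a pointwise lower bound for $z$, but it must be set up carefully. A simpler route is the uniform convergence $z_{k,\delta}\to z_\infty=\int_0^tU\,ds$, as in the proof of Lemma~\ref{lem:dirichlet}. The set $\{U\ge\mu/2\}\cap$ rectangle is compact away from $t=0$ except near $x=d_1$, and on it $z_\infty\ge 2\delta_0>0$. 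Hence $z_{k,\delta}\ge\delta_0$ there for large $k$. Consequently
\[
k v_{k,\delta}\bigl(u_{k,\delta}^{m-1}-\lambda_1\bigr)\le k V_0 e^{-k\delta_0}\,U_0^{m-1}\le\lambda_2
\]
for all $k$ larger than some $K$, which finishes the proof.

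The main obstacle is the corner near $t=0$. If $d_1>0$, then $x\ge d_1>0$, so $U(x,t)\to0$ as $t\to0$ uniformly in $x$. Thus $\{U\ge\mu/2\}$ stays in $t\ge t_0>0$ with $t_0$ depending on $d_1$ and $\eta_0$, and $z_\infty$ is bounded below there since $U\ge\mu/2$ on $[t\,(x/(\eta_0\sqrt t))^2\ldots]$. More concretely, for $(x,t)$ with $x<\eta_0\sqrt t$ one has $U(x,s)\ge\mu/2$ for $s\in[x^2/\eta_0^2,t]$. Because $d_1>0$, I need this interval to have positive length. That fails only on the boundary curve $x=\eta_0\sqrt t$. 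I will therefore use the margin $\mu/2$ versus $\mu$ twice: set $\|u_{k,\delta}-U\|<\mu/4$, so $u>\mu$ implies $U>3\mu/4$, i.e. $x<\eta_1\sqrt t$ with $\eta_1<\eta_0$. Then $z_\infty\ge(\mu/2)(t-x^2/\eta_0^2)\ge(\mu/2)\,t\,(1-\eta_1^2/\eta_0^2)\ge(\mu/2)(d_1^2/\eta_1^2)(1-\eta_1^2/\eta_0^2)=:2\delta_0$. This gives an explicit uniform positive lower bound, and the step above goes through.
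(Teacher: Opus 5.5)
Your proposal is correct and follows essentially the same route as the paper. You split according to whether $u_{k,\delta}^{m-1}\le\lambda_1$. On the complementary set, you use the uniform convergence $u_{k,\delta}\to U$ to place it inside a superlevel set of $U$, where $z_\infty=\int_0^t U\,ds$ is bounded below, so that $k v_{k,\delta}=kV_0e^{-kz_{k,\delta}}\le kV_0e^{-k\delta_0}\to0$.

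The only real difference is that you derive the lower bound on $z_\infty$ explicitly from the self-similar form (using $d_1>0$ and a double margin), whereas the paper just asserts it. Your concern about the curve $x=\eta_0\sqrt t$ is unnecessary: there $U(x,\cdot)>0$ on $(x^2/\iota^2,t]$, so $z_\infty>0$ on the whole compact superlevel set. Your explicit bound is nonetheless harmless and cleaner.
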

\begin{proof}
The equation for $v_{k,\delta} $ can be integrated explicitly, yielding
\[
    v_{k,\delta} (x,t) 
    = V_0 e^{-k z_{k,\delta} (x,t)},
\qquad
    z_{k,\delta} (x,t):=\int_0^t u_{k,\delta} (x,s)\,ds .
\]

We first consider the region where $u_{k,\delta}^{\,m-1}(x,t)\le \lambda_1$.
In this case $u_{k,\delta}^m \le \lambda_1 u_{k,\delta} $, and thus
\[
    k u_{k,\delta}^m v_{k,\delta}  - \lambda_1 k u_{k,\delta}  v_{k,\delta}  - \lambda_2 u_{k,\delta} 
       \le -\lambda_2 u_{k,\delta}  \le 0.
\]
Thus the desired inequality holds on this region for every $k>0$.

Next we consider points where $u_{k,\delta}^{m-1}(x,t)>\lambda_1$.
Set
\[
    A_k := \{ (x,t)\in [d_1,d_2]\times[0,d_2^2\iota^{-2}] \mid u_{k,\delta}^{m-1}(x,t)>\lambda_1 \}.
\]
By assumption $u_{k,\delta} \to U:=f^{(U_0,\delta V_0)} \left(\frac{x}{\sqrt{t}}\right)$ uniformly on $[d_1,d_2]\times[0,d_2^2\iota^{-2}]$ as $k \to \infty$.
We claim that there exists $K_0>0$ such that
\begin{equation}
    A_k \subset A_\infty:=
    \Bigl\{(x,t)\in[d_1,d_2]\times[0,d_2^2\iota^{-2}]
          \ \Big|\ U^{m-1}(x,t)\ge \tfrac{\lambda_1}{2}\Bigr\}
\end{equation}
for all $k\ge K_0$.

From the definition of $U$, there exists a constant $\eta>0$ such that
\[
    \int_0^t U(x,s)\,ds \ge \eta
    \qquad\text{for all } (x,t)\in A_\infty.
\]
Since $z_{k,\delta}\to z_\infty := \int_0^t U(x,s)\,ds$ uniformly,
we may choose $K_1>K_0$ such that for all $k\ge K_1$,
\[
    z_{k,\delta}(x,t)\ge \frac{\eta}{2}
    \qquad \text{for all } (x,t)\in A_k.
\]

Using $0\le u_{k,\delta}\le U_0$ and $v_{k,\delta} = V_0 e^{-k z_{k,\delta}}$, we estimate on $A_k$
\[
\begin{aligned}
    k u_{k,\delta}^{m} v_{k,\delta} - \lambda_1 k u_{k,\delta} v_{k,\delta} - \lambda_2 u_{k,\delta}
    &\le u_{k,\delta} \bigl( k u_{k,\delta}^{m-1} v_{k,\delta} - \lambda_2 \bigr) \\
    &\le u_{k,\delta} \bigl( U_0^{m-1} V_0 \, k e^{-k z_{k,\delta}(x,t)} - \lambda_2 \bigr) \\
    &\le u_{k,\delta} \bigl( U_0^{m-1} V_0 \, k e^{-k\eta/2} - \lambda_2 \bigr).
\end{aligned}
\]
Since $k e^{-k\eta/2}\to 0$ as $k\to\infty$, 
we can pick $K\ge K_1$ such that
\[
    U_0^{m-1} V_0 \, k e^{-k\eta/2} \le \lambda_2
    \qquad\text{for all } k\ge K.
\]
Consequently, for all $k\ge K$ the desired inequality holds throughout $A_k$.
Combining both cases completes the proof.
\end{proof}

Next we use the one-dimensional function constructed in Lemma \ref{lem:dirichlet}
to build a subsolution in the $n$-dimensional setting.
For later use, we introduce the notation of an annulus.  
For $0<\underline r<\overline r$ and $x_*\in\Omega$, let
\[
    An(\underline r,\overline r,x_*)
    := B_{\overline r}(x_*) \setminus B_{\underline r}(x_*).
\]
In Section~2 we only consider annuli centered at the origin, and we simply write
\[
    An(\underline r,\overline r) := An(\underline r,\overline r,0).
\]

\begin{lemma}\label{lem:sub-nD}
Let \(m>1\), \(0<d_1<d_3<d_2\), \(U_0,V_0>0\), and
\(0<t_0<T_0\) be fixed. Assume that
\[
    T_0<\left(\frac{d_2}{d_3}\right)^2 t_0 .
\]
Then there exist nonnegative functions \(u_k\), \(v_k\), and a constant
\(K_1>0\) such that, for every \(k\ge K_1\), the pair \((u_k,v_k)\)
satisfies
\begin{align}
    \partial_t u_k
        &\le \Delta u_k-k u_k^m v_k
        &&\text{in } An(d_1,d_2)\times(0,T_0), \label{sub u}\\
    \partial_t v_k
        &\ge -ku_kv_k
        &&\text{in } An(d_1,d_2)\times(0,T_0), \label{sub v}
\end{align}
together with
\begin{align}\label{sub bc}
    u_k &\le U_0
        &&\text{on } \partial B_{d_1}\times(0,T_0),\\
    u_k &=0
        &&\text{on } \partial B_{d_2}\times(0,T_0),
\end{align}
and
\begin{align}\label{sub ic}
    u_k(\cdot,0)=0,\qquad
    v_k(\cdot,0)=V_0
        \quad\text{in } An(d_1,d_2).
\end{align}
Moreover, there exists \(\rho>0\) such that
\[
    u_k(x,t)\ge \rho
    \qquad
    \text{for all }(x,t)\in An(d_1,d_3)\times[t_0,T_0]
\]
and all \(k\ge K_1\).
\end{lemma}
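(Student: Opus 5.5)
We will take \(u_k\) and \(v_k\) radial, built from the one-dimensional scaled solutions \((u_{k,\delta},v_{k,\delta})\) of Lemma~\ref{lem:dirichlet} and Lemma~\ref{lem:reaction-ineq}:
\[
u_k(x,t):=u_{k,\delta}(|x|,t),\qquad v_k(x,t):=v_{k,\delta}(|x|,t)
\]
on the interval \((d_1,d_2)\), with \(U_0\), \(V_0\) as given and a parameter \(\delta\in(0,1]\) fixed below. The boundary and initial conditions \eqref{sub bc} and \eqref{sub ic} are then inherited directly. These are \(u_{k,\delta}(d_1,t)\le U_0\), \(u_{k,\delta}(d_2,t)=0\), \(u_{k,\delta}(\cdot,0)=0\) and \(v_{k,\delta}(\cdot,0)=V_0\).

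For \eqref{sub v}, the equation \(\partial_t v_{k,\delta}=-ku_{k,\delta}v_{k,\delta}\) gives equality. For \eqref{sub u}, we write \(r=|x|\) and use \(\Delta u_k=\partial_{rr}u_{k,\delta}+\frac{n-1}{r}\partial_r u_{k,\delta}\). By the one-dimensional equation,
\[
\partial_t u_k-\Delta u_k+ku_k^mv_k=-\delta k u_{k,\delta}v_{k,\delta}-\tfrac{n-1}{r}\partial_ru_{k,\delta}+ku_{k,\delta}^mv_{k,\delta}.
\]
The drift term is the problem, since \(\partial_ru_{k,\delta}\le0\) makes \(-\frac{n-1}{r}\partial_r u\ge0\), which has the wrong sign. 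To control it we would modify the ansatz to \(u_k:=e^{-\lambda_2 t}u_{k,\delta}(|x|,t)\) and, if needed, also use a different shift or weight. The \(-\lambda_2u\) term in Lemma~\ref{lem:reaction-ineq} is designed to absorb such lower-order terms. The derivative \(-\frac{n-1}{r}\partial_r u\) is not bounded by a multiple of \(u\) near the front, however. So we expect the main obstacle to be controlling \(|\partial_r u_{k,\delta}|\) uniformly in \(k\).

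The plan at that step is to take \(\lambda_1=\delta\) and rewrite the defect as
\[
\bigl(ku^mv-\lambda_1kuv-\lambda_2u\bigr)+\lambda_2u-\tfrac{n-1}{r}\partial_ru .
\]
The first bracket is \(\le0\) by Lemma~\ref{lem:reaction-ineq} for \(k\ge K\). For the remainder, we would first try a change of variables \(r\mapsto s(r)\), i.e.\ \(u_k(x,t)=u_{k,\delta}(s(|x|),t)\) with \(s\) increasing and convex chosen so that \(s''\) compensates \(\frac{n-1}{r}s'\). The difficulty is that \(s'^2\) then rescales the diffusion. Failing that, we would use a time rescaling \(t\mapsto\gamma t\) with \(\gamma<1\), which leaves the slack \((1-\gamma)\partial_tu\ge0\) and gives \(\partial_t u\ge0\) from Lemma~\ref{lem:dirichlet}. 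In the one-dimensional self-similar profile \(U_t=-\frac{x}{2t}U_x\), and \(\frac{x}{2t}\) dominates \(\frac{n-1}{x}\) whenever \(t\lesssim x^2\), that is, on the relevant range \(t\le d_2^2\iota^{-2}\) with \(x\ge d_1\). By Lemma~\ref{lem:dirichlet} this holds uniformly for large \(k\) away from the front, and near the front \(u\) is small and the reaction inequality handles the rest. Choosing \(\delta\) small makes \(\iota(U_0,\delta V_0)\) large, since \(\iota\int_0^\iota e^{(\iota^2-s^2)/4}ds=2U_0/(\delta V_0)\to\infty\). Hence the guaranteed time horizon \(d_2^2\iota^{-2}\) shrinks, and we would then rescale time so that \([0,T_0]\) is covered. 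This is where \(T_0<(d_2/d_3)^2t_0\) enters.

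Finally, the positivity bound comes from the uniform convergence \(u_{k,\delta}\to f^{(U_0,\delta V_0)}(r/\sqrt{t})\) after the time rescaling. On \(An(d_1,d_3)\times[t_0,T_0]\) the similarity variable \(r/\sqrt{t}\) is at most \(d_3/\sqrt{t_0}\). We would choose the parameters (\(\delta\) and the time scale) so that this lies strictly below the front position \(\iota\). Meanwhile \(T_0<(d_2/d_3)^2t_0\) keeps the front \(\iota\sqrt{t}\le\iota\sqrt{T_0}\) inside \(d_2\) consistently with the Dirichlet condition. The limit profile is then bounded below by some \(2\rho>0\) on this compact set, and uniform convergence gives \(u_k\ge\rho\) for \(k\ge K_1\). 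The drift absorption is the step we expect to require the most care and possibly a modified ansatz.
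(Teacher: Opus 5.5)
Your proposal does not establish \eqref{sub u}, which is the heart of the lemma. You identify the obstruction correctly, but none of your suggested repairs closes it. With $u_k=u_{k,\delta}(|x|,t)$ the defect contains $-\frac{n-1}{r}\partial_r u_{k,\delta}\ge 0$.

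\emph{Time-decay factor.} Multiplying by $e^{-\lambda_2 t}$ does not touch this term. It also makes $u_k\le u_{k,\delta}$, which breaks \eqref{sub v}: with $v_k=v_{k,\delta}$ you have $\partial_t v_k=-ku_{k,\delta}v_k$, so \eqref{sub v} requires exactly $u_k\ge u_{k,\delta}$.

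\emph{Splitting with $\lambda_1=\delta$.} This leaves the remainder $\lambda_2u-\frac{n-1}{r}\partial_r u\ge0$, which has the wrong sign, because nothing in your ansatz generates a $-\lambda_2 u$ term.

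\emph{Time rescaling.} This route needs a pointwise bound of the type $(1-\gamma)\partial_t u_{k,\delta}\ge\frac{n-1}{r}|\partial_r u_{k,\delta}|$ (up to reaction terms), uniformly in $k$. Lemma~\ref{lem:dirichlet} gives only uniform convergence of $u_{k,\delta}$ and the signs of $\partial_r u_{k,\delta}$ and $\partial_t u_{k,\delta}$, not convergence of derivatives. The limit profile also has a corner at $x=\iota\sqrt t$, so the identity $U_t=-\frac{x}{2t}U_x$ cannot be transferred to $u_{k,\delta}$ near the front. The reaction inequality cannot rescue that region either, for the reason you gave yourself: it only yields slack proportional to $u$ and $kuv$.

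The missing idea is a multiplicative spatial weight that decreases in $r$. The paper sets $\sigma=1+\frac{n-1}{d_1}$, builds $u_{k,\delta}$ with inner boundary value $U_*=e^{-\sigma(d_2-d_1)}U_0$, and takes $u_k=e^{\sigma(d_2-r)}u_{k,\delta}(r,t)$, $v_k=v_{k,\delta}(r,t)$. Then
$e^{-\sigma(d_2-r)}(\partial_tu_k-\Delta u_k+ku_k^mv_k)=-\delta kuv+(2\sigma-\frac{n-1}{r})\partial_ru-\sigma(\sigma-\frac{n-1}{r})u+e^{(m-1)\sigma(d_2-r)}ku^mv$.

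\begin{itemize}
\item The cross term $2\sigma\partial_r u$ produced by the weight dominates the drift, so only the sign $\partial_ru_{k,\delta}\le0$ is used.
\item The zeroth-order term is $\le -u$. This fills exactly the $-\lambda_2u$ slot of Lemma~\ref{lem:reaction-ineq}, with $\lambda_1=\delta c_*$, $\lambda_2=c_*$ and $c_*=e^{-(m-1)\sigma(d_2-d_1)}$.
\item The weight is $\ge1$, so \eqref{sub v} holds and the lower bound $\rho$ passes from $u_{k,\delta}$ to $u_k$.
\item The inner boundary value is at most $e^{\sigma(d_2-d_1)}U_*=U_0$.
\end{itemize}

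No time rescaling is needed for the horizon either. Choose $\iota_*\in(d_3/\sqrt{t_0},\,d_2/\sqrt{T_0})$, an interval that is nonempty exactly by the hypothesis. Then choose $\delta$ with $\iota(U_*,\delta V_0)=\iota_*$, which is possible since the right side of \eqref{iota} increases from $0$ to $\infty$. This gives $d_2^2\iota_*^{-2}>T_0$, and your positivity argument then goes through as stated.
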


\begin{proof}
Set
\[
    \sigma:=1+\frac{n-1}{d_1},
    \qquad
    U_*:=e^{-\sigma(d_2-d_1)}U_0 .
\]
Since
\[
    T_0<\left(\frac{d_2}{d_3}\right)^2t_0,
\]
we choose \(\iota_*>0\) such that
\[
    \frac{d_3}{\sqrt{t_0}}<\iota_*<\frac{d_2}{\sqrt{T_0}},
\]
and then choose \(\delta>0\) so that
\[
    \iota(U_*,\delta V_0)=\iota_*.
\]
Set \(S_*:=d_2^2\iota_*^{-2}\), so that \(T_0<S_*\).

Let \((u_{k,\delta},v_{k,\delta})\) be the one-dimensional scaled solution
on \((d_1,d_2)\times(0,S_*)\) with data corresponding to \((U_*,V_0)\).
Equivalently, \((u_{k,\delta},\delta v_{k,\delta})\) satisfies
\eqref{eq:lem2.2} with data \((U_*,\delta V_0)\).  By
Lemma~\ref{lem:dirichlet},
\[
    u_{k,\delta}(r,t)\to
    f^{(U_*,\delta V_0)}\left(\frac r{\sqrt t}\right)
\]
uniformly on \([d_1,d_2]\times[0,S_*]\).  Moreover,
\[
    0\le u_{k,\delta}\le U_*,
    \qquad
    \partial_r u_{k,\delta}\le0,
    \qquad
    \partial_tu_{k,\delta}\ge0.
\]

For \(r=|x|\), define on \(An(d_1,d_2)\times(0,T_0)\)
\[
    u_k(x,t):=e^{\sigma(d_2-r)}u_{k,\delta}(r,t),
    \qquad
    v_k(x,t):=v_{k,\delta}(r,t).
\]
Then \(u_k,v_k\) are nonnegative and radial.  The initial conditions are
immediate.  Also, \(u_k=0\) on \(r=d_2\), while on \(r=d_1\),
\[
    u_k
    =
    e^{\sigma(d_2-d_1)}u_{k,\delta}(d_1,t)
    \le
    e^{\sigma(d_2-d_1)}U_*
    =
    U_0.
\]
Thus \eqref{sub bc} and \eqref{sub ic} hold.

The inequality for \(v_k\) follows directly from
\begin{align*}
    \partial_t v_k+ku_kv_k
    &=
    \partial_t v_{k,\delta}
    +
    k e^{\sigma(d_2-r)}u_{k,\delta}v_{k,\delta} \\
    &=
    k u_{k,\delta}v_{k,\delta}
    \left(e^{\sigma(d_2-r)}-1\right)
    \ge0,
\end{align*}
because \(r\le d_2\).  Hence \eqref{sub v} holds.

For \(u_k\), using the radial Laplacian formula and the equation for
\(u_{k,\delta}\), we obtain
\begin{align*}
&e^{-\sigma(d_2-r)}
\left(
    \partial_tu_k-\Delta u_k+ku_k^mv_k
\right)\\
={}&
-\delta k u_{k,\delta}v_{k,\delta}
-
\left(-2\sigma+\frac{n-1}{r}\right)\partial_r u_{k,\delta}
-
\sigma\left(\sigma-\frac{n-1}{r}\right)u_{k,\delta}  \\
&\quad
+
e^{(m-1)\sigma(d_2-r)}
k u_{k,\delta}^m v_{k,\delta}.
\end{align*}
Since \(r\ge d_1\), \(\partial_ru_{k,\delta}\le0\), and
\(\sigma=1+(n-1)/d_1\), the middle two terms are bounded above by
\(-u_{k,\delta}\).  Hence
\begin{align*}
&e^{-\sigma(d_2-r)}
\left(
    \partial_tu_k-\Delta u_k+ku_k^mv_k
\right)\\
\le{}&
e^{(m-1)\sigma(d_2-r)}
\Big[
    k u_{k,\delta}^m v_{k,\delta}
    -
    e^{-(m-1)\sigma(d_2-r)}
    \bigl(\delta k u_{k,\delta}v_{k,\delta}+u_{k,\delta}\bigr)
\Big].
\end{align*}
Since
\[
    e^{-(m-1)\sigma(d_2-r)}
    \ge e^{-(m-1)\sigma(d_2-d_1)}
    =:c_*>0,
\]
the bracket is at most
\[
    k u_{k,\delta}^m v_{k,\delta}
    -
    \delta c_* k u_{k,\delta}v_{k,\delta}
    -
    c_*u_{k,\delta}.
\]
By Lemma~\ref{lem:reaction-ineq}, with
\[
    \lambda_1=\delta c_*,
    \qquad
    \lambda_2=c_*,
\]
this expression is nonpositive on \((d_1,d_2)\times(0,S_*)\) for all
sufficiently large \(k\), and hence also on \((d_1,d_2)\times(0,T_0)\).
Therefore \eqref{sub u} follows.

It remains to prove positivity.  If
\((r,t)\in[d_1,d_3]\times[t_0,T_0]\), then
\[
    \frac r{\sqrt t}
    \le
    \frac{d_3}{\sqrt{t_0}}
    <
    \iota_*.
\]
Thus
\[
    f^{(U_*,\delta V_0)}
    \left(\frac r{\sqrt t}\right)>0
\]
on the compact set \([d_1,d_3]\times[t_0,T_0]\).  Hence, for some
\(\rho>0\),
\[
    f^{(U_*,\delta V_0)}
    \left(\frac r{\sqrt t}\right)\ge2\rho
    \qquad\text{on }[d_1,d_3]\times[t_0,T_0].
\]
By the uniform convergence, increasing \(K_1\) if necessary,
\[
    u_{k,\delta}(r,t)\ge\rho
    \qquad
    \text{on }[d_1,d_3]\times[t_0,T_0]
\]
for all \(k\ge K_1\).  Since
\(u_k=e^{\sigma(d_2-r)}u_{k,\delta}\) and \(e^{\sigma(d_2-r)}\ge1\),
the same lower bound holds for \(u_k\) on
\(An(d_1,d_3)\times[t_0,T_0]\).  Taking \(K_1\) larger if necessary, the
proof is complete.
\end{proof}
\section{Proof of main theorem}
We begin by introducing a notation that will be used repeatedly throughout this section.
Let $\Omega' \subset \Omega$ be a bounded domain with $C^{2}$ boundary, and let
$u_{0}\in C(\overline{\Omega'})$ satisfy
\[
    u_0(x)=0 \qquad \text{for } x\in \partial\Omega'.
\]
We denote by $h_{(\Omega',u_{0})}$ the unique classical solution of the heat equation on $\Omega'$
with Dirichlet boundary condition and initial datum $u_{0}$:
\begin{equation*}
\begin{cases}
\partial_t h_{(\Omega',u_0)}
    = \Delta h_{(\Omega',u_0)}
        &\text{in }\Omega'\times(0,T),\\
h_{(\Omega',u_0)}(x,t)=0
        &\text{on }\partial\Omega'\times(0,T),\\
h_{(\Omega',u_0)}(x,0)=u_0(x)
        &\text{in }\Omega'.
\end{cases}
\end{equation*}
The analytic semigroup theory for the Dirichlet Laplacian on $C(\overline{\Omega'})$
guarantees the existence and uniqueness of $h_{(\Omega',u_0)}$. Moreover, by standard
parabolic regularity theory,
\[
    h_{(\Omega',u_0)}
        \in C(\overline{\Omega'} \times [0,T])
        \cap C^\infty(\Omega' \times (0,T)).
\]

Before turning to the proof of the main theorem, we establish the following auxiliary lemma.
\begin{lemma}\label{lem:vk-small}
Let \((u_0,v_0)\) satisfy the assumptions of either
Theorem~\ref{thm:Dirichlet} or Theorem~\ref{thm:Neumann}, and let
\((u_k,v_k)\) be the corresponding classical solutions.
Let \(\Omega'\Subset\Omega\) and \(0<t_1<T\) be arbitrary.
Then there exist constants \(K>0\), \(\rho>0\), and a time
\(s\in[0,t_1)\) such that, for all \(k\ge K\),
\[
    u_k(x,t)\ge \rho
    \qquad
    \text{for all }(x,t)\in\Omega'\times[s,t_1],
\]
and
\[
    v_k(x,t)<k^{-3}
    \qquad
    \text{for all }(x,t)\in\Omega'\times[t_1,T).
\]
\end{lemma}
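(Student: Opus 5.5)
We will propagate positivity from the set where $u_0>0$ to all of $\Omega'$ by applying Lemma~\ref{lem:sub-nD} finitely many times, comparing with the true solution via Theorem~\ref{theo:comparison principle}. Then we convert the resulting lower bound on $u_k$ into exponential smallness of $v_k$ through the explicit formula for $v_k$.

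\emph{Step 0: initial positivity.} Since $u_0\not\equiv0$ is continuous, there are a ball $B_{2r_0}(x_0)\Subset\Omega$ and $c_0>0$ with $u_0\ge 2c_0$ on $B_{2r_0}(x_0)$. Hence $v_0=0$ there, and so $v_k\equiv0$ on $B_{2r_0}(x_0)\times(0,T)$. On this ball $u_k$ is a supersolution of the heat equation. We compare it with $h_{(B_{2r_0}(x_0),\phi)}$, where $\phi$ is a continuous cutoff with $\phi\le u_0$ that vanishes on the boundary; this uses Theorem~\ref{theo:comparison principle} with $\tilde v=0$ and the Dirichlet boundary portion. The comparison gives $u_k\ge c_1>0$ on $\overline{B_{r_0}(x_0)}\times[0,t_1]$, uniformly in $k$.

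\emph{Step 1: chain of annuli.} Since $\Omega$ is connected, $\Omega'\Subset\Omega$, and $\overline{\Omega'}$ is compact, we can cover $\overline{\Omega'}$ by finitely many balls. We link them to $B_{r_0}(x_0)$ by a finite chain of points $x_0=y_0,y_1,\dots,y_N$ with $B_{r}(y_j)\subset B_{r_{j-1}}(y_{j-1})$-type overlaps, keeping all balls inside a fixed compact subset of $\Omega$. Split $[0,t_1]$ into times $0=s_0<s_1<\dots<s_N<t_1$. At step $j$, suppose $u_k\ge\rho_{j-1}$ on $B_{d_1}(y_j)\times[s_{j-1},t_1]$ for $k$ large. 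Apply Lemma~\ref{lem:sub-nD}, recentred at $y_j$ and shifted in time to start at $s_{j-1}$, with $U_0=\rho_{j-1}$, $V_0=\|v_0\|_{L^\infty}$, and radii $d_1<d_3<d_2$ chosen so that $B_{d_2}(y_j)\Subset\Omega$. The constraint $T_0<(d_2/d_3)^2t_0$ is met by taking $t_0=s_j-s_{j-1}$ and $T_0=t_1-s_{j-1}$, which requires $d_3/d_2$ small relative to these times; we may take $d_2$ only slightly larger than needed and let the radius $d_3$ gained per step be small. More steps are then needed, but still finitely many. Theorem~\ref{theo:comparison principle} is applied on $A=An(d_1,d_2,y_j)$ with the whole boundary treated as Dirichlet. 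The inner boundary data hold because $u_k\ge\rho_{j-1}\ge$ the subsolution there, and the outer boundary data hold because the subsolution vanishes there. At the initial time the subsolution is $0\le u_k$ and $V_0\ge v_k(\cdot,s_{j-1})$. The comparison gives $u_k\ge\rho_j$ on $B_{d_3}(y_j)\times[s_j,t_1]$. (The inner ball is already covered, so the gain is the annulus region.) After finitely many steps, and by taking $\rho$ as the minimum of the $\rho_j$, $s=s_N$, and $K$ as the maximum of the $K_1$'s, we obtain $u_k\ge\rho$ on $\Omega'\times[s,t_1]$.

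\emph{Step 2: smallness of $v_k$.} For $t\ge t_1$,
\[
v_k(x,t)=v_0(x)\exp\Bigl(-k\int_0^t u_k\,ds\Bigr)\le \|v_0\|_{L^\infty}e^{-k\rho(t_1-s)},
\]
and this is $<k^{-3}$ for $k$ large.

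\emph{Main obstacle.} The main obstacle is the geometric bookkeeping in Step 1. The time window must stay compatible with $T_0<(d_2/d_3)^2t_0$ while the annuli remain compactly inside $\Omega$. A second concern is that the order-of-quantifier choice of $\delta$ inside Lemma~\ref{lem:sub-nD} depends on $\rho_{j-1}$, which is allowed since there are finitely many steps. I would first try fixing one small radius $r$ (less than a third of the distance from a neighbourhood of $\overline{\Omega'}$ to $\partial\Omega$), taking $d_1=r$, $d_3=2r$ and $d_2$ large enough within that margin, and spacing consecutive centres by at most $r$.
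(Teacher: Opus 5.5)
Your proposal is correct and follows essentially the same route as the paper. Both arguments get positivity on a ball where $v_0=0$ by comparison with a Dirichlet heat flow, then apply Lemma~\ref{lem:sub-nD} successively on a finite chain of time-shifted annuli via Theorem~\ref{theo:comparison principle}, and finish with $v_k=v_0\exp(-k\int_0^t u_k)$.

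The time constraint you flag needs no smallness of $d_3/d_2$. The paper fixes the radii first and then picks $s_{j+1}<t_1$ with $t_1-s_j<(d_2/d_3)^2(s_{j+1}-s_j)$. This is always possible because $d_2/d_3>1$. Your concrete choice $d_1=r$, $d_3=2r$, $d_2>d_3$ therefore works with the geometric choice $t_1-s_{j+1}=\theta(t_1-s_j)$ for any fixed $0<\theta<1-(d_3/d_2)^2$.
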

\begin{proof}
By the assumptions on \(u_0\) and \(v_0\), we can choose
\(x_0\in\operatorname{supp}(u_0)\) and \(r_0>0\) such that
\[
    B_{2r_0}(x_0)\Subset\Omega,
    \qquad
    \inf_{B_{2r_0}(x_0)}u_0
        >\frac12\sup_{\Omega}u_0
        =:\overline U_0 .
\]
Since \(u_0v_0=0\), we have \(v_0=0\) in \(B_{2r_0}(x_0)\).  Hence
\(v_k\equiv0\) in \(B_{2r_0}(x_0)\times(0,T)\), and \(u_k\) satisfies the
heat equation there.

Choose \(\tilde u_0\in C(\overline{B_{2r_0}(x_0)})\) such that
\[
    0<\tilde u_0\le \overline U_0
    \quad\text{in }B_{2r_0}(x_0),\qquad
    \tilde u_0=0
    \quad\text{on }\partial B_{2r_0}(x_0).
\]
By the comparison principle,
\[
    h_{(B_{2r_0}(x_0),\tilde u_0)}(x,t)\le u_k(x,t)
    \qquad
    \text{in }B_{2r_0}(x_0)\times[0,T).
\]
Since \(B_{r_0}(x_0)\times[0,t_1]\) is compact and
\(h_{(B_{2r_0}(x_0),\tilde u_0)}>0\) there, there exists \(\rho_0>0\) such
that
\[
    u_k(x,t)\ge \rho_0
    \qquad
    \text{for }(x,t)\in B_{r_0}(x_0)\times[0,t_1].
\]

We formally set
\[
    An(0,r_0,x_0):=B_{r_0}(x_0).
\]
Choose finitely many annuli
\[
    \{An(\underline r_i,\overline r_i,x_i)\}_{i=0}^{N}\Subset\Omega
\]
such that
\[
    \Omega'\Subset\bigcup_{i=0}^{N}An(\underline r_i,\overline r_i,x_i),
\]
and
\[
    B_{\underline r_i}(x_i)
    \Subset
    An(\underline r_{i-1},\overline r_{i-1},x_{i-1})
    \qquad (1\le i\le N).
\]
For each \(1\le i\le N\), choose
\(\hat r_i>\overline r_i\) so that
\[
    An(\underline r_i,\overline r_i,x_i)
    \subset
    An(\underline r_i,\hat r_i,x_i)
    \Subset\Omega.
\]

We next choose times
\[
    0=s_0<s_1<\cdots<s_N<t_1
\]
so that, for each \(0\le j<N\),
\[
    t_1-s_j
    <
    \left(\frac{\hat r_{j+1}}{\overline r_{j+1}}\right)^2
    (s_{j+1}-s_j).
\]

We prove by induction that, for each \(0\le j\le N\), there exist
\(\rho_j>0\) and \(K_j>0\) such that, for all \(k\ge K_j\),
\[
    u_k(x,t)\ge\rho_j
\]
for all
\[
    (x,t)\in
    \left(\bigcup_{i=0}^{j}An(\underline r_i,\overline r_i,x_i)\right)
    \times[s_j,t_1].
\]
The case \(j=0\) follows from the lower bound on \(B_{r_0}(x_0)\).

Assume the claim holds for some \(j<N\).  We apply
Lemma~\ref{lem:sub-nD} to the annulus
\[
    An(\underline r_{j+1},\hat r_{j+1},x_{j+1})
\]
with
\[
    d_1=\underline r_{j+1},
    \qquad
    d_2=\hat r_{j+1},
    \qquad
    d_3=\overline r_{j+1},
\]
and with
\[
    U_0=\rho_j,
    \qquad
    V_0=\|v_0\|_{L^\infty(\Omega)},
\]
\[
    t_0=s_{j+1}-s_j,
    \qquad
    T_0=t_1-s_j.
\]
The choice of \(s_j,s_{j+1}\) gives
\[
    T_0<
    \left(\frac{d_2}{d_3}\right)^2t_0,
\]
so the short-time annulus lemma applies.

Let \((\underline u_k,\underline v_k)\) be the corresponding subsolution,
and define the time-shifted functions
\[
    \underline u_k^{(j)}(x,t)
        :=\underline u_k(x,t-s_j),
    \qquad
    \underline v_k^{(j)}(x,t)
        :=\underline v_k(x,t-s_j)
\]
for \(t\in[s_j,t_1]\).  Then
\((\underline u_k^{(j)},\underline v_k^{(j)})\) satisfies the required
subsolution inequalities on
\[
    An(\underline r_{j+1},\hat r_{j+1},x_{j+1})
    \times(s_j,t_1).
\]
At \(t=s_j\),
\[
    \underline u_k^{(j)}=0\le u_k,
    \qquad
    \underline v_k^{(j)}=\|v_0\|_{L^\infty(\Omega)}\ge v_k.
\]
On the inner boundary
\[
    \partial B_{\underline r_{j+1}}(x_{j+1})\times[s_j,t_1],
\]
the induction hypothesis gives
\[
    u_k\ge \rho_j\ge \underline u_k^{(j)}.
\]
On the outer boundary
\[
    \partial B_{\hat r_{j+1}}(x_{j+1})\times[s_j,t_1],
\]
we have
\[
    \underline u_k^{(j)}=0\le u_k.
\]
Therefore, by the comparison principle,
\[
    \underline u_k^{(j)}\le u_k
\]
in
\[
    An(\underline r_{j+1},\hat r_{j+1},x_{j+1})
    \times[s_j,t_1].
\]
By Lemma~\ref{lem:sub-nD}, there exist \(\rho_{j+1}>0\) and
\(K_{j+1}>0\) such that, for all \(k\ge K_{j+1}\),
\[
    \underline u_k^{(j)}(x,t)\ge\rho_{j+1}
\]
for
\[
    (x,t)\in
    An(\underline r_{j+1},\overline r_{j+1},x_{j+1})
    \times[s_{j+1},t_1].
\]
Hence
\[
    u_k(x,t)\ge\rho_{j+1}
\]
on the same set.  Combining this with the induction hypothesis, and replacing
\(\rho_{j+1}\) by a smaller number if necessary, we get
\[
    u_k(x,t)\ge\rho_{j+1}
\]
on
\[
    \left(\bigcup_{i=0}^{j+1}An(\underline r_i,\overline r_i,x_i)\right)
    \times[s_{j+1},t_1].
\]
This completes the induction.

Taking \(j=N\), we obtain constants \(\rho>0\), \(K_*>0\), and
\(s:=s_N<t_1\) such that
\[
    u_k(x,t)\ge\rho
    \qquad
    \text{for }(x,t)\in\Omega'\times[s,t_1]
\]
for all \(k\ge K_*\).  Since
\[
    v_k(x,t)
    =
    v_0(x)\exp\left(-k\int_0^t u_k(x,\tau)\,d\tau\right),
\]
we have, for \(x\in\Omega'\),
\[
    v_k(x,t_1)
    \le
    \|v_0\|_{L^\infty(\Omega)}
    \exp\{-k\rho(t_1-s)\}.
\]
Thus, for sufficiently large \(k\),
\[
    v_k(x,t_1)<k^{-3}
    \qquad
    \text{for all }x\in\Omega'.
\]
Finally, since \(\partial_t v_k=-ku_kv_k\le0\), it follows that
\[
    v_k(x,t)<k^{-3}
    \qquad
    \text{for all }(x,t)\in\Omega'\times[t_1,T).
\]
The proof is complete.
\end{proof}
With Lemma~\ref{lem:vk-small} now established, we are in a position to prove
Theorem~\ref{thm:Dirichlet}.
The key point is that $v_k$ becomes sufficiently small on compact subsets
away from $t=0$, so that the reaction term $k u_k^m v_k$ no longer affects
the dynamics of $u_k$ in the limit.
\begin{proof}
[Proof of Theorem~\ref{thm:Dirichlet}]
We first show the convergence of $v_k$ away from $t=0$.
Let $\tau\in(0,T)$ and $\Omega_0\Subset\Omega$ be arbitrary.
Then Lemma~\ref{lem:vk-small} directly implies that
\[
    v_k \to 0 \qquad\text{uniformly in }
    \overline{\Omega_0}\times[\tau,T].
\]
Since $\tau$ and $\Omega_0$ were arbitrary, this proves the second
assertion of the theorem.

It remains to prove that $u_k$ converges uniformly to $h_{(\Omega,u_0)}$ on
$\overline{Q_T}$.
Let $\varepsilon>0$ be arbitrary.
Since $h_{(\Omega,u_0)}$ is continuous on $\overline{Q_T}$ and satisfies
$h_{(\Omega,u_0)}=0$ on $S_T$, we can choose a bounded domain
$\Omega'\Subset\Omega$ with $C^2$ boundary such that
\begin{align}\label{eq: m1}
	    h_{(\Omega,u_0)}(x,t) < \frac{\e}{8}
    \qquad\text{for all } (x,t)\in (\Omega\setminus\Omega')\times[0,T].
\end{align}

Next, we approximate the initial data by a smooth function.
Since $u_0\in C(\overline{\Omega})$, we may choose 
$\tilde{u}_0\in C^\infty(\Omega)$ such that
\[
    0 \le u_0(x)-\tilde{u}_0(x) < \frac{\e}{4}
    \qquad\text{for all } x\in\Omega,
\]
and such that $\partial\supp(\tilde{u}_0)$ is smooth and $\supp(\tilde{u}_0)\Subset\Omega'$.
By continuity of the Dirichlet heat flow, there exists 
$0<\tilde{t}<T$ such that
\begin{align}\label{eq: m2}
    0 \le 
    h_{(\Omega,u_0)}(x,t)
    - h_{(\supp(\tilde{u}_0),\tilde{u}_0)}(x,t)
    < \frac{\varepsilon}{2}
\end{align}
for $(x,t)\in \supp(\tilde{u}_0)\times[0,\tilde{t}]$,
and
\[
    0 \le h_{(\Omega,u_0)}(x,t) < \frac{\varepsilon}{2}
    \qquad\text{for } (x,t)\in 
    (\Omega\setminus\supp(\tilde{u}_0))\times[0,\tilde{t}].
\]
By Theorem~\ref{theo:comparison principle} and the fact that $v_0=0$ on $\supp(u_0)$, we have
\[
    h_{(\supp(\tilde{u}_0),\tilde{u}_0)}(x,t)
    \le
    u_k(x,t)
    \le
    h_{(\Omega,u_0)}(x,t)
    \qquad\text{for } (x,t)\in \supp(\tilde{u}_0)\times[0,\tilde{t}].
\]
Since $u_k\ge0$, combining the above inequalities yields
\begin{align} \label{eq:heat1}
	    |h_{(\Omega,u_0)}(x,t) - u_k(x,t)|
    < \frac{\varepsilon}{2}
    \qquad\text{for } (x,t)\in \overline{\Omega}\times[0,\tilde{t}].
\end{align}

Applying Lemma~\ref{lem:vk-small} with $\Omega'$ and $\tilde{t}$ in place of $\Omega_0$
and $\tau$, we see that, for all sufficiently large $k$,
\[
    v_k(x,t) < k^{-3}
    \qquad\text{for all } (x,t)\in \Omega'\times[\tilde{t},T).
\]
We now define 
\[
    \underline{u}(x)
    :=
    \begin{cases}
        h_{(\supp(\tilde{u}_0),\tilde{u}_0)}(x,\tilde{t}),
            & x\in\supp(\tilde{u}_0),\\[1mm]
        0, & x\in\Omega\setminus\supp(\tilde{u}_0),
    \end{cases}
\]
and introduce the function
\[
    \underline{h}(x,t)
    :=
    h_{(\Omega',\underline{u})}(x,t-\tilde{t}),
    \qquad t\ge \tilde{t}.
\]
On $\Omega'\times[\tilde{t},T)$ we also define
\[
    \underline{U}_k(x,t):=e^{-k^{-1} t}\,\underline{h}(x,t),
    \qquad
    \underline{V}_k(x,t):=k^{-3}.
\]
A direct computation shows that
\begin{align*}
    \partial_t \underline{U}_k
    - \Delta \underline{U}_k
    + k\,\underline{U}_k^{m}\underline{V}_k
    &= -k^{-1}\underline{U}_k
       + e^{-k^{-1} t}\,\partial_t\underline{h}
       - e^{-k^{-1} t}\,\Delta\underline{h}
       + k\,\underline{U}_k^{m}\underline{V}_k \\
    &= -k^{-1}\underline{U}_k
       + k^{-2}\underline{U}_k^{m} \\
    &= k^{-1}\underline{U}_k\bigl(-1 + k^{-1}\underline{U}_k^{m-1}\bigr).
\end{align*}
Since $\underline{U}_k$ is bounded on $\Omega'\times[\tilde{t},T)$, we can
choose $k$ sufficiently large so that
\[
    -1 + k^{-1}\underline{U}_k^{m-1} \le 0
    \quad\text{everywhere in }\Omega'\times[\tilde{t},T),
\]
and hence
\[
    \partial_t \underline{U}_k
    - \Delta \underline{U}_k
    + k\,\underline{U}_k^{m}\underline{V}_k
    \le 0
    \qquad\text{in } \Omega'\times[\tilde{t},T).
\]
Moreover, since $\underline{V}_k$ is a constant function, we have
\[
    \partial_t \underline{V} _k
    \ge -k\underline{U}_k\underline{V}_k
    \qquad\text{in } \Omega'\times[\tilde{t},T).
\]
Using the inequalities at $t=\tilde{t}$ and on the lateral boundary
$\partial\Omega'\times[\tilde{t},T)$, and applying
Theorem~\ref{theo:comparison principle}, we obtain
\[
    e^{-k^{-1} t}\underline{h}(x,t)
    = \underline{U}_k(x,t)
    \le u_k(x,t)
    \qquad\text{for } (x,t)\in \Omega'\times[\tilde{t},T).
\]

On the other hand, by \eqref{eq: m1}, \eqref{eq: m2} and the maximum
principle, we have
\[
    |h_{(\Omega,u_0)}(x,t)-\underline{h}(x,t)|
    < \frac{\varepsilon}{2}
    \qquad\text{for } (x,t)\in \Omega'\times[\tilde{t},T).
\]
Combining these estimates yields, for all sufficiently large $k$,
\[
    u_k(x,t) - h_{(\Omega,u_0)}(x,t)
    \ge -\varepsilon
    \qquad\text{for } (x,t)\in \Omega'\times[\tilde{t},T).
\]
Since we already know from the comparison principle that
\[
    u_k(x,t) \le h_{(\Omega,u_0)}(x,t)
    \qquad\text{for } (x,t)\in \Omega\times[0,T),
\]
it follows that
\begin{equation}\label{eq:heat2}
    |u_k(x,t)-h_{(\Omega,u_0)}(x,t)|
    \le \varepsilon
    \qquad\text{for } (x,t)\in \Omega\times[\tilde{t},T).
\end{equation}

Finally, combining \eqref{eq:heat1} with \eqref{eq:heat2} we
conclude that
\[
    \sup_{(x,t)\in\overline{Q_T}}
    |u_k(x,t)-h_{(\Omega,u_0)}(x,t)| \le \varepsilon
\]
for all sufficiently large $k$.
This proves the theorem.
\end{proof}
With Theorem~\ref{thm:Dirichlet} established, we now address Theorem~\ref{thm:Neumann}.
The proof follows the same general scheme, with a slight modification in the treatment near the boundary.
\begin{proof}
[Proof of Theorem~\ref{thm:Neumann}]
Let \(u_\infty\) be the solution of \eqref{heat-Neumann}.  As in the
Dirichlet case,
\[
    u_\infty\in C(\overline{Q_T})\cap C^\infty(Q_T),
\]
and the comparison principle gives
\begin{equation}\label{eq:n1}
    u_k\le u_\infty
    \qquad\text{in }\overline{Q_T}.
\end{equation}

Let \(\varepsilon>0\) be fixed.  Choose
\(\tilde u_0\in C^\infty(\Omega)\) such that
\[
    0\le u_0-\tilde u_0<\frac{\varepsilon}{4}
    \qquad\text{in }\Omega,
\]
and such that \(\partial\supp(\tilde u_0)\setminus\partial\Omega\) is smooth.
On \(\supp(\tilde u_0)\), consider the mixed heat problem
\begin{equation*}
\begin{cases}
\partial_t h_1=\Delta h_1
    &\text{in }\supp(\tilde u_0)\times(0,T),\\
\partial_\nu h_1=0
    &\text{on }(\partial\supp(\tilde u_0)\cap\partial\Omega)\times(0,T),\\
h_1=0
    &\text{on }(\partial\supp(\tilde u_0)\setminus\partial\Omega)\times(0,T),\\
h_1(\cdot,0)=\tilde u_0
    &\text{in }\supp(\tilde u_0).
\end{cases}
\end{equation*}
We extend \(h_1\) by zero to \(\Omega\), and denote the extension by
\(\tilde h_1\).  By the comparison principle,
\begin{equation}\label{eq:n2}
    \tilde h_1\le u_k
    \qquad\text{in }\Omega\times[0,T].
\end{equation}
Since \(u_\infty\) and \(\tilde h_1\) are continuous and have initial data
\(u_0\) and \(\tilde u_0\), respectively, we may choose
\(0<\tilde t<T\) such that
\[
    |u_\infty(x,t)-\tilde h_1(x,t)|
    <\frac{\varepsilon}{2}
    \qquad\text{for }(x,t)\in\Omega\times[0,\tilde t].
\]
Combining this with \eqref{eq:n1} and \eqref{eq:n2}, we obtain
\begin{equation}\label{eq:n3}
    |u_k(x,t)-u_\infty(x,t)|
    <\frac{\varepsilon}{2}
    \qquad\text{for }(x,t)\in\Omega\times[0,\tilde t].
\end{equation}

We next prove that \(v_k\) is small on all of \(\overline{\Omega}\) after
time \(\tilde t\).  Choose \(\Omega'\Subset\Omega\) and finitely many annuli
\[
    \{An(\underline r_i,\overline r_i,x_i)\}_{i=1}^N
\]
such that
\[
    \overline{\Omega}\subset
    \Omega'\cup\bigcup_{i=1}^N
    \bigl(\Omega\cap An(\underline r_i,\overline r_i,x_i)\bigr),
    \qquad
    B_{\underline r_i}(x_i)\Subset\Omega',
\]
and
\[
    \nu(y)\cdot (y-x_i)>0
\]
for every
\[
    y\in \partial\Omega\cap An(\underline r_i,\overline r_i,x_i).
\]
For each \(i\), choose \(\hat r_i>\overline r_i\) so that the same geometric
condition remains valid on
\(\partial\Omega\cap An(\underline r_i,\hat r_i,x_i)\).

Applying Lemma~\ref{lem:vk-small} with \(t_1=\tilde t\), we find
\(\rho>0\), \(s\in[0,\tilde t)\), and \(K>0\) such that, for all \(k\ge K\),
\[
    u_k(x,t)\ge\rho
    \qquad\text{for }(x,t)\in\Omega'\times[s,\tilde t],
\]
and
\[
    v_k(x,t)<k^{-3}
    \qquad\text{for }(x,t)\in\Omega'\times[\tilde t,T).
\]

Fix \(i\in\{1,\dots,N\}\).  Since \(\hat r_i>\overline r_i\), we can choose
\(s_i\in(s,\tilde t)\) such that
\[
    \tilde t-s
    <
    \left(\frac{\hat r_i}{\overline r_i}\right)^2(s_i-s).
\]
We apply Lemma~\ref{lem:sub-nD} to the annulus
\(An(\underline r_i,\hat r_i,x_i)\) with
\[
    d_1=\underline r_i,\qquad
    d_2=\hat r_i,\qquad
    d_3=\overline r_i,
\]
\[
    U_0=\rho,\qquad
    V_0=\|v_0\|_{L^\infty(\Omega)},
\]
and
\[
    t_0=s_i-s,\qquad
    T_0=\tilde t-s.
\]
Let \((\underline u_{k,i},\underline v_{k,i})\) be the corresponding
subsolution, shifted to the interval \([s,\tilde t]\).  On the inner
boundary, the inequality \(u_k\ge\rho\) on \(\Omega'\times[s,\tilde t]\)
implies
\[
    u_k\ge \underline u_{k,i}.
\]
On the artificial outer boundary,
\(\underline u_{k,i}=0\le u_k\).  On the part of the boundary lying on
\(\partial\Omega\), the radial monotonicity of \(\underline u_{k,i}\) and
the geometric condition \(\nu(y)\cdot(y-x_i)>0\) give
\[
    \partial_\nu \underline u_{k,i}\le0=\partial_\nu u_k.
\]
Moreover, at \(t=s\),
\[
    \underline u_{k,i}=0\le u_k,
    \qquad
    \underline v_{k,i}=\|v_0\|_{L^\infty(\Omega)}\ge v_k.
\]
Therefore, by Theorem~\ref{theo:comparison principle},
\[
    \underline u_{k,i}\le u_k
\]
in
\[
    \Omega\cap An(\underline r_i,\hat r_i,x_i)\times[s,\tilde t].
\]
By Lemma~\ref{lem:sub-nD}, there exist \(\rho_i>0\) and \(K_i>0\) such that
\[
    u_k(x,t)\ge\rho_i
\]
for
\[
    (x,t)\in
    \Omega\cap An(\underline r_i,\overline r_i,x_i)\times[s_i,\tilde t],
    \qquad k\ge K_i.
\]
Hence, for \(x\in\Omega\cap An(\underline r_i,\overline r_i,x_i)\),
\[
    v_k(x,\tilde t)
    \le
    \|v_0\|_{L^\infty(\Omega)}
    \exp\{-k\rho_i(\tilde t-s_i)\}.
\]
Thus, increasing \(K_i\) if necessary,
\[
    v_k(x,\tilde t)<k^{-3}
    \qquad
    \text{for }x\in
    \Omega\cap An(\underline r_i,\overline r_i,x_i).
\]
Since \(\partial_t v_k=-ku_kv_k\le0\), the same estimate holds on this
annulus for all \(t\in[\tilde t,T)\).  Combining this with the estimate on
\(\Omega'\), and then increasing \(K\) once more, we obtain
\begin{equation}\label{eq:n-vsmall}
    v_k(x,t)<k^{-3}
    \qquad
    \text{for all }(x,t)\in\overline{\Omega}\times[\tilde t,T)
\end{equation}
for all sufficiently large \(k\).

Finally, let \(\underline h\) be the Neumann heat flow with initial value
\(\tilde h_1(\cdot,\tilde t)\) at time \(\tilde t\):
\[
\begin{cases}
\partial_t\underline h=\Delta\underline h
    &\text{in }\Omega\times(\tilde t,T),\\
\partial_\nu\underline h=0
    &\text{on }\partial\Omega\times(\tilde t,T),\\
\underline h(\cdot,\tilde t)=\tilde h_1(\cdot,\tilde t)
    &\text{in }\Omega.
\end{cases}
\]
As in the proof of Theorem~\ref{thm:Dirichlet}, the functions
\[
    \underline U_k(x,t):=e^{-k^{-1}t}\underline h(x,t),
    \qquad
    \underline V_k(x,t):=k^{-3}
\]
form a lower comparison pair on \(\Omega\times[\tilde t,T)\), for all large
\(k\), thanks to \eqref{eq:n-vsmall}.  Therefore
\[
    e^{-k^{-1}t}\underline h(x,t)\le u_k(x,t)
    \qquad
    \text{in }\Omega\times[\tilde t,T).
\]
Using the maximum principle for the Neumann heat equation and the choice of
\(\tilde u_0\), we have
\[
    |u_\infty(x,t)-\underline h(x,t)|
    <\frac{\varepsilon}{2}
    \qquad
    \text{for }(x,t)\in\Omega\times[\tilde t,T).
\]
Together with \(u_k\le u_\infty\), this gives, for all sufficiently large
\(k\),
\[
    |u_k(x,t)-u_\infty(x,t)|<\varepsilon
    \qquad
    \text{for }(x,t)\in\Omega\times[\tilde t,T).
\]
Combining this with \eqref{eq:n3}, we conclude that
\[
    \sup_{\overline{Q_T}}|u_k-u_\infty|<\varepsilon
\]
for all sufficiently large \(k\).  The proof is complete.
\end{proof}


\section*{Acknowledgments}
This work was supported in part by JSPS KAKENHI Grant Numbers JP23H00085 and JP26K17020.


\end{document}